\theoremstyle{plain}
\newtheorem{theorem}{Theorem}[section]
\newtheorem{lemma}[theorem]{Lemma}
\newtheorem{corollary}[theorem]{Corollary}
\newtheorem{proposition}[theorem]{Proposition}
\theoremstyle{definition}
\theoremstyle{remark}
\newtheorem{remark}[theorem]{Remark}
\renewcommand{\le}{\leqslant}
\renewcommand{\leq}{\leqslant}
\renewcommand{\ge}{\geqslant}
\renewcommand{\geq}{\geqslant}
\renewcommand{\hat}{\widehat}
\DeclareMathOperator{\conv}{\mathrm{Conv}}
\DeclareMathOperator{\aff}{\mathrm{aff}}
\DeclareMathOperator{\measure}{\mathrm{measure}}
\newcommand{\R}{\mathbb{R}}
\newcommand{\flo}[1]{\left\lfloor{#1}\right\rfloor}
\newcommand{\cei}[1]{\left\lceil{#1}\right\rceil}
\title[Complete Kneser Transversals]{Complete Kneser Transversals}
\thanks{Supported by the ECOS-Nord project M13M01, by CONACyT project 166306, by CONACyT grant 277462, by PAPIIT-UNAM project IN112614 and by Frenkel Foundation}
\thanks{$^{\star}$ Corresponding Author: jonathan.chappelon@umontpellier.fr}
\author[\tiny J.~Chappelon]{J.~Chappelon$^{\, \star}$}
\address{Institut Montpelli\'{e}rain Alexander Grothendieck, Universit\'{e} de Montpellier, Case Courrier 051, Place Eug\`{e}ne Bataillon, 34095 Montpellier Cedex 05, France}
\email{jonathan.chappelon@umontpellier.fr}
\author[L.~Mart\'{i}nez-Sandoval]{L.~Mart\'{i}nez-Sandoval}
\address{Instituto de Matem\'{a}ticas, Universidad Nacional Aut\'{o}noma de M\'{e}xico, Ciudad Universitaria, M\'{e}xico D.F., 04510, Mexico}
\email{leomtz@im.unam.mx}
\author[L.~Montejano]{L.~Montejano}
\address{Instituto de Matem\'{a}ticas, Universidad Nacional Aut\'{o}noma de M\'{e}xico, Ciudad Universitaria, M\'{e}xico D.F., 04510, Mexico}
\email{luis@math.unam.mx}
\author[L.P.~Montejano]{L.P.~Montejano}
\address{Institut Montpelli\'{e}rain Alexander Grothendieck, Universit\'{e} de Montpellier, Case Courrier 051, Place Eug\`{e}ne Bataillon, 34095 Montpellier Cedex 05, France}
\email{lpmontejano@gmail.com}
\author[J.L.~Ram\'{i}rez Alfons\'{i}n]{J.L.~Ram\'{i}rez Alfons\'{i}n}
\address{Institut Montpelli\'{e}rain Alexander Grothendieck, Universit\'{e} de Montpellier, Case Courrier 051, Place Eug\`{e}ne Bataillon, 34095 Montpellier Cedex 05, France}
\email{jorge.ramirez-alfonsin@umontpellier.fr}
\subjclass[2010]{52A35, 05C15, 52C40, 52B40, 52B99}
\keywords{transversals, Kneser hypergraphs, oriented matroids, cyclic polytope}
\date{August 10, 2016}
\begin{document}
\begin{abstract}
Let $k,d,\lambda\geqslant1$ be integers with $d\geqslant\lambda $. Let $m(k,d,\lambda)$ be the maximum positive integer $n$ such that every set of $n$ points (not necessarily in general position) in $\mathbb{R}^{d}$ has the property that the convex hulls of all $k$-sets have a common transversal $(d-\lambda)$-plane. It turns out that  $m(k, d,\lambda)$ is strongly connected with other interesting problems, for instance, the chromatic number of Kneser hypergraphs and a discrete version of Rado's centerpoint theorem. In the same spirit, we introduce a natural discrete version $m^*$ of $m$ by considering the existence of \emph{complete Kneser transversals}. We study the relation between them and give a number of lower and upper bounds of $m^*$ as well as the exact value in some cases. The main ingredient for the proofs are Radon's partition theorem as well as oriented matroids tools. By studying the alternating oriented matroid we obtain the asymptotic behavior of the function $m^*$ for the family of cyclic polytopes.
\end{abstract}
\maketitle
\section{Introduction}
Let $k,d,\lambda \geq 1$ be integers with both $d, k \geq \lambda $. Consider the function $m(k,d,\lambda )$, defined to be the maximum positive integer $n$ such that every set of $n$ points (not necessarily in general position) in $\R^{d}$ has the property that the convex hulls of all $k$-sets have a common transversal $(d-\lambda)$-plane.
\smallskip

In \cite{ABMR}, the following inequalities were obtained
\begin{equation}\label{ineq1}
d-\lambda +k+\cei{\frac{k}{\lambda }} -1\leq m(k,d,\lambda ) < d+2(k-\lambda)+1  .
\end{equation}
An interesting feature of the value of $m(k,d,\lambda )$ is its strong connection with the chromatic number of Kneser hypergraphs  \cite{kneser, lovasz} as well as with the Rado's centerpoint theorem \cite{Rado}. Indeed, for the former it is proved in \cite{ABMR} that
\begin{equation*}
\text{if } \: m(k,d,\lambda) < n, \: \text{ then } \:   d-\lambda +1 < \chi \left(KG^{\lambda +1}(n,k)\right).
\end{equation*}
For the latter, recall that the well-known Rado's centerpoint theorem \cite{Rado} states that if $X$ is a bounded measurable set in $\R^d$ then there exists a point $x\in\R^d$ such that
$$
\measure\left(P\cap X\right)\ge \frac{\measure\left(X\right)}{d+1}
$$
for each half-space $P$ that contains $x$ (see also \cite{Neumann} for the case $d=2$).

Independently Bukh and Matousek \cite[Section 6]{BM} and Arocha, Bracho, Montejano and Ram\'{i}rez-Alfons\'{i}n in \cite{ABMR} consider the following generalization of a discrete version of Rado's centerpoint theorem. Let $n,d,\lambda \geq 1$ be integers with $d  \geq \lambda $ and let

\smallskip
$\tau(n,d,\lambda )\overset{\mathrm{def}}{=}$ the maximum positive integer $\tau$
such that for any collection $X$ of $n$ points in $\R^{d}$, there is a  $(d-\lambda )$-plane $L_X$ such that any closed half-space $H$ through $L_X$ contains at least $\tau$ points.
\smallskip

By the hyperplane separation theorem we have that $n-\tau(n,d,\lambda)+1$ is equal to the minimum positive integer $k$ such that for any collection $X$ of $n$ points in $\R^{d}$  there is a common transversal $(d-\lambda)$-plane to the convex hulls of all $k$-sets, which is essentially $m(k,d,\lambda)$. Therefore, any improvement to the lower or upper bounds for $m(k,d,\lambda)$ will give important insight on the above interesting problem.

The purpose of this paper is to introduce and study a discrete version of the function $m(k,d,\lambda)$ which perhaps will allow us to improve lower or upper bounds for $m(k, d,\lambda)$. Let $k,d,\lambda \geq 1$ be integers with $k,d\geq \lambda $ and let $X\subset\R^{d}$ be a finite set. We call $L$ a \emph{Kneser transversal} of $X$ if it is a $(d-\lambda)$-plane transversal to the convex hulls of all $k$-sets of $X$. If in addition $L$ contains $(d-\lambda)+1$ points of $X$, then $L$ is a \emph{complete Kneser $(d-\lambda)$-transversal}.  Let us define

\smallskip
$m^*(k,d,\lambda )\overset{\mathrm{def}}{=}$ the maximum positive integer $n$ such that every set of $n$ points (not necessarily in general position) in $\R^{d}$ has a complete Kneser $(d-\lambda )$-transversal to the convex hulls of its $k$-sets.
\smallskip

This is a natural discrete version of the original function $m$. Indeed, consider a set of points $X$ in $\R^d$. The existence of an arbitrary $(d-\lambda)$-plane transversal to the convex hull of the $k$-sets of $X$ is not an invariant of the order type. For example, if $d=2$ and $X$ is the vertex set of a regular hexagon then the center is a $0$-plane transversal to the convex hull of the $4$-sets. But by suitably perturbing these $6$ points slightly we lose this property.
On the other hand, the existence of a complete Kneser $(d-\lambda)$-transversal to the convex hull of the $k$-sets is an invariant of the order type (see Propositions~\ref{propMinTr} and \ref{propRadTr}). This allows us to study $m^*$ using oriented matroid theory. Since the parameter $m^*$ requires additional conditions on the transversals, we clearly have
$$
m^*(k,d,\lambda) \le m(k,d,\lambda).
$$
The case $k=\lambda$ is easy to deal with (see Proposition~\ref{propEasy}), so from here on we will assume that $k\geq \lambda+1$. It turns out that the function $m^*$ has two different behaviors. The arguments for the case $\lambda-1\geq\cei{\frac{d}{2}}$, are usually simpler than those for the case $\lambda-1<\cei{\frac{d}{2}}$. For this reason, we define 
$$
\alpha(d,\lambda)=\frac{\lambda-1}{\left\lceil \frac{d}{2}\right\rceil}
$$
and we call $\alpha\geq 1$ the trivial range and $\alpha<1$ the non-trivial range.
This paper is devoted to investigate $m^*$. We present the exact value of $m^*$ in the trivial range and give bounds and some exact values of $m^*$ in the non-trivial range. The paper is organized as follows. In Section~\ref{radon} we provide tools from convex geometry to detect complete Kneser transversals using Radon partitions. We find the following lower bound for $m^*$.

\begin{theorem}\label{lowerbound}
In the non-trivial range, when $\alpha(d,\lambda)<1$, we have that
$$
 (d-\lambda+1)+k\le m^*(k,d,\lambda).
$$
\end{theorem}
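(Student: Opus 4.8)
The plan is to show that \emph{every} set $X$ of $n:=(d-\lambda+1)+k$ points in $\R^{d}$ admits a complete Kneser $(d-\lambda)$-transversal; since this places $n$ in the range quantified in the definition of $m^{*}$, it yields $m^{*}(k,d,\lambda)\ge (d-\lambda+1)+k$. The first move is to reduce transversality to a \emph{single} $k$-set. Suppose we can split $X=A\sqcup B$ with $|A|=d-\lambda+1$, $|B|=k$ and $\aff(A)\cap\conv(B)\neq\emptyset$, and let $L$ be any $(d-\lambda)$-plane containing $\aff(A)$ (one exists since $d-\lambda+1$ points force $\dim\aff(A)\le d-\lambda$). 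Then $L$ passes through the $d-\lambda+1$ points of $A$, and it is transversal to the convex hull of every $k$-set $S$: if $S\cap A\neq\emptyset$ this is immediate, as $S$ meets $L$; and if $S\cap A=\emptyset$ then $S=B$ (because $|B|=k$), while $L\cap\conv(B)\supseteq\aff(A)\cap\conv(B)\neq\emptyset$. Thus $L$ is a complete Kneser transversal. This is exactly the Radon-type criterion recorded in Section~\ref{radon} (Proposition~\ref{propRadTr}).

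Next I would realise such a partition through a linear dependence of the lifted points $\hat{x}=(x,1)\in\R^{d+1}$. Because $k\ge\lambda+1$ we have $n\ge d+2$, so the $n$ vectors $\hat{x}$ are linearly dependent and contain a circuit, i.e.\ a minimal dependent subset $C$ with a relation $\sum_{x\in C}c_{x}\hat{x}=0$, all $c_{x}\neq0$ and $|C|\le d+2$. Splitting $C$ by the sign of $c_{x}$ into $P=\{c_{x}>0\}$ and $Q=\{c_{x}<0\}$ (both nonempty since $\sum_{x}c_{x}=0$), and negating $c$ if necessary, I may assume $|P|\le|Q|$, whence $|P|\le\flo{|C|/2}\le\flo{(d+2)/2}=\flo{d/2}+1$. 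This is where the non-trivial range enters: $\alpha(d,\lambda)<1$ is equivalent to $\lambda\le\cei{d/2}$, which gives $d-\lambda+1\ge d-\cei{d/2}+1=\flo{d/2}+1\ge|P|$.

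Finally I would route the circuit into the partition, exploiting crucially that $A$ contributes only its \emph{affine} hull, so that merely the positive coefficients must be confined to $A$. I would choose $A\supseteq P$ of size $d-\lambda+1$ while leaving out at least one point $q_{0}\in Q$ (possible since $n\ge d-\lambda+2$), and set $B=X\setminus A$, so that $B\cap P=\emptyset$, $q_{0}\in B$, and every $b\in B$ has $c_{b}\le0$. Writing $t=-\sum_{b\in B}c_{b}>0$, the relation rearranges to $\frac{1}{t}\sum_{a\in A}c_{a}\hat{a}=\frac{1}{t}\sum_{b\in B}(-c_{b})\hat{b}$; the right-hand side is a genuine convex combination of $B$ (coefficients $-c_{b}/t\ge0$ summing to $1$), and the left-hand side an affine combination of $A$ (coefficients summing to $1$ by the last-coordinate identity $\sum_{a}c_{a}=t$), so their common value lies in $\aff(A)\cap\conv(B)$. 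The conceptual heart, and the main obstacle to avoid, is precisely this asymmetry: a \emph{balanced} Radon partition with both parts small is unavailable for arbitrary point sets, but since $B$ need only meet the affine span of $A$ it suffices to tame the single smaller sign-class of one circuit, and the inequality $\flo{d/2}+1\le d-\lambda+1$ defining the non-trivial range is exactly what guarantees this. No general-position hypothesis is required, since $A$ need not be affinely independent.
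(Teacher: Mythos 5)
Your proof is correct, and its core strategy coincides with the paper's: both arguments rest on the observation that a set $A$ of $d-\lambda+1$ points with $\aff(A)\cap\conv(X\setminus A)\neq\emptyset$ yields a complete Kneser transversal (the complement being the \emph{only} $k$-set disjoint from $A$), and both produce $A$ by padding the smaller class of a minimal Radon partition, whose size is at most $\flo{\frac{d+2}{2}}$, which is at most $d-\lambda+1$ exactly in the non-trivial range. The differences are in execution, and they are worth noting. The paper applies Radon's theorem to an arbitrary $(d+2)$-point subset and then routes the key geometric step through Lemma~\ref{convaff3} and Proposition~\ref{propRadTr}, which are stated for points in general position and must be patched by Remark~\ref{remNoGen} in the degenerate case; you instead take a circuit of the lifted vectors $(x,1)\in\R^{d+1}$ and verify $\aff(A)\cap\conv(B)\neq\emptyset$ by rearranging the dependence relation directly --- essentially the same algebra as the paper's own proof of Proposition~\ref{propRadTr}, but run inline, so that no general-position hypothesis ever enters. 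Your write-up is also more careful on a point the paper glosses over: since $A$ need not be affinely independent, $\aff(A)$ may have dimension strictly less than $d-\lambda$, and the complete Kneser transversal should be taken to be a $(d-\lambda)$-plane \emph{containing} $\aff(A)$, as you do. A final minor difference: the paper pads the small Radon class only with points of the opposite class inside the chosen $(d+2)$-subset, whereas you pad with arbitrary points of $X$, requiring only that one point of the negative class stay outside $A$; both choices work, and yours slightly streamlines the bookkeeping.
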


In order to give an upper bound for $m^*(k,d,\lambda)$, we study a specific family of sets of points. We do this in Section~\ref{cyclic}, where we review cyclic polytopes and alternating oriented matroids. We introduce the following function.

\smallskip
$\zeta(k,d,\lambda)\overset{\mathrm{def}}{=}$ the maximum number of vertices that the cyclic polytope in $\R^d$ can have, so that it has a complete Kneser $(d-\lambda)$-transversal to the convex hulls of its $k$-sets of vertices.
\smallskip

We clearly have
$$
m^*(k,d,\lambda)\leq \zeta(k,d,\lambda).
$$
This helps us to establish the following theorem that completely solves the problem in the trivial range.

\begin{theorem}\label{cyceasy}
When $\alpha(d,\lambda)\geq 1$, we have that
$$
m^*(k, d,\lambda)= \zeta(k,d,\lambda)=d-\lambda + k.
$$
In particular, when $\alpha(d,\lambda)\geq 1$, the vertex set of the cyclic polytope with at least $(d-\lambda+1)+k$ points does not have a complete Kneser $(d-\lambda)$-transversal to the convex hulls of its $k$-sets.
\end{theorem}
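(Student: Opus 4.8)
The plan is to establish the two inequalities $d-\lambda+k\le m^*(k,d,\lambda)$ and $\zeta(k,d,\lambda)\le d-\lambda+k$; together with the already noted $m^*\le\zeta$ these force all three quantities to coincide. The lower bound is an easy pigeonhole argument, valid in every range: given any $n=d-\lambda+k$ points, choose any $d-\lambda+1$ of them and let $L$ be any $(d-\lambda)$-plane containing them. Then at most $n-(d-\lambda+1)=k-1$ points lie off $L$, so every $k$-set meets $L$ and $L$ is a complete Kneser $(d-\lambda)$-transversal. Hence $m^*\ge d-\lambda+k$, and it remains to prove $\zeta\le d-\lambda+k$, i.e. that the cyclic polytope on $n\ge d-\lambda+k+1$ vertices admits no complete Kneser $(d-\lambda)$-transversal.

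I would first reduce this to a statement about real polynomials. Writing the vertices as $\gamma(t_1),\dots,\gamma(t_n)$ on the moment curve $\gamma(t)=(t,t^2,\dots,t^d)$ with $t_1<\cdots<t_n$, any $d-\lambda+2$ of them are affinely independent, so a $(d-\lambda)$-plane carries at most $d-\lambda+1$ vertices; thus any complete Kneser transversal must be $\aff(A)$ for a set $A$ of exactly $d-\lambda+1$ vertices. A $k$-set meeting $A$ automatically meets $\aff(A)$, so the only constraints come from $k$-subsets $S\subseteq X\setminus A$, and $A$ yields a transversal if and only if $\conv(S)\cap\aff(A)\neq\emptyset$ for all such $S$. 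By strict separation of the compact set $\conv(S)$ from the flat $\aff(A)$, one has $\conv(S)\cap\aff(A)=\emptyset$ precisely when there is an affine function $f$ with $f|_A=0$ and $f|_S>0$; equivalently the polynomial $p(t)=f(\gamma(t))$ has degree at most $d$, vanishes at the parameters of $A$, and is positive at those of $S$. Factoring $p=q\cdot r_A$ with $r_A(t)=\prod_{a\in A}(t-t_a)$, this amounts to finding $q$ with $\deg q\le\lambda-1$ such that $q\cdot r_A>0$ on $S$.

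The heart of the argument is a sign-change count. At a vertex $w\in X\setminus A$ the sign of $r_A$ equals $(-1)^{\#\{a\in A\,:\,t_a>t_w\}}$, and reading these signs along $X\setminus A$ in increasing order produces a pattern with at most $|A|=d-\lambda+1$ sign changes (each change consumes a distinct element of $A$). In the trivial range $\alpha(d,\lambda)\ge1$, i.e. $\lambda-1\ge\cei{\frac{d}{2}}$, one has $d-\lambda+1\le\lambda-1$; hence a polynomial $q$ of degree at most $\lambda-1$ can be chosen to match this pattern exactly, by placing one root in each interval where the sign flips. For that $q$ the product $p=q\cdot r_A$, of degree at most $d$ and so of the form $f\circ\gamma$ for an affine $f$, is strictly positive on all of $X\setminus A$. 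Since $|X\setminus A|=n-(d-\lambda+1)\ge k$, we may take any $k$-subset $S\subseteq X\setminus A$; then $\conv(S)\cap\aff(A)=\emptyset$, so $A$ fails to give a transversal. As $A$ was arbitrary, the cyclic polytope has no complete Kneser transversal, proving $\zeta\le d-\lambda+k$ and hence the theorem.

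I expect the main obstacle to be packaging the two equivalences of the second paragraph cleanly: the separation of a convex hull from an affine flat \emph{through} that flat, and its translation into the divisibility $p=q\cdot r_A$. Once these are in place, everything hinges on the single inequality $d-\lambda+1\le\lambda-1$ supplied by the trivial range, and this is exactly where the alternating structure of the cyclic polytope enters, since the sign pattern of $r_A$ records the relevant covectors of the alternating oriented matroid. It is worth noting that the dichotomy is sharp: in the non-trivial range the pattern can exhibit more than $\lambda-1$ sign changes, the separating $q$ need not exist, and a transversal can survive, consistent with the stronger lower bound $m^*\ge d-\lambda+k+1$ of Theorem~\ref{lowerbound}.
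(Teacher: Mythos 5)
Your proof is correct, and the upper bound $\zeta(k,d,\lambda)\le d-\lambda+k$ is obtained by a genuinely different mechanism than the paper's. The lower bound (pigeonhole on a flat through any $d-\lambda+1$ of the $d-\lambda+k$ points) is the same in both. For the upper bound, the paper stays combinatorial: for a candidate flat $\aff(T)$ and any $(\lambda+1)$-subset $K'$ of the complementary $k$-set, the set $T\cup K'$ has $d+2$ points, its Radon partition alternates along the moment curve by the circuit description of the alternating oriented matroid, and since $\lambda+1>\cei{\frac{d+2}{2}}$ in the trivial range, $K'$ cannot be monochromatic; Proposition~\ref{propRadTr} then gives $\conv(K')\cap\aff(T)=\emptyset$ and Proposition~\ref{propMinTr} (the Carath\'eodory-type reduction) upgrades this to $\conv(K)\cap\aff(T)=\emptyset$. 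You instead bypass both propositions and the oriented-matroid fact entirely: you characterize failure of transversality by a separating affine functional, factor the corresponding polynomial as $q\cdot r_A$ with $r_A(t)=\prod_{a\in A}(t-t_a)$, count that $r_A$ has at most $|A|=d-\lambda+1$ sign changes along the remaining vertices, and use the trivial-range inequality $d-\lambda+1\le\lambda-1$ to build $q$ of degree at most $\lambda-1$ matching that sign pattern. What your route buys is self-containedness (you effectively re-derive the moment-curve alternation by elementary sign counting rather than citing \cite[Section 9.4]{BL}) and a stronger conclusion: a single hyperplane through $\aff(A)$ having \emph{all} of $X\setminus A$ strictly on one side, so $\aff(A)$ misses $\conv(X\setminus A)$ itself, not merely each $k$-subset. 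What the paper's route buys is economy and reusability: Propositions~\ref{propMinTr} and \ref{propRadTr} and the alternation fact are needed anyway for the harder non-trivial-range bounds of Theorem~\ref{thCyclic}, so the trivial range comes almost for free, and the argument transfers verbatim to the level of oriented matroids. The only step in your write-up that deserves care is the separation equivalence: strict separation of the compact set $\conv(S)$ from the closed flat $\aff(A)$ gives an affine $f$ that is a priori only constant (not zero) on $\aff(A)$, but subtracting that constant, or projecting along the direction space of $\aff(A)$ and separating a point from a compact convex set, fixes this; as you note, this is routine.
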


When $\alpha(d,\lambda)<1$, we present upper and lower bounds for $\zeta(k,d,\lambda)$ in the non-trivial range (see Theorem \ref{thCyclic}).  This allows us to obtain an upper bound for $m^*$ in the non-trivial range.
We end Section~\ref{cyclic} by showing that these bounds are asymptotically correct in terms of $k$.

\begin{theorem}\label{asymp1}
In the non trivial-range we have that
$$
\lim_{k\to\infty}\frac{\zeta(k,d,\lambda)}{k}=2-\alpha(d,\lambda).
$$
\end{theorem}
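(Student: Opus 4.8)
The plan is to deduce the limit from the two-sided estimate for $\zeta(k,d,\lambda)$ established in Theorem~\ref{thCyclic} by a $\liminf$/$\limsup$ squeeze. Write the bounds of that theorem as $\ell(k)\le \zeta(k,d,\lambda)\le u(k)$, where $\ell$ and $u$ depend also on the fixed parameters $d$ and $\lambda$. Both bounds are, up to floors and ceilings, affine in $k$, and the crux is that their leading coefficients in $k$ both equal $2-\alpha(d,\lambda)$. Granting this, dividing through by $k$ and letting $k\to\infty$ kills the additive terms that are bounded in $k$ as well as the rounding, so that $\ell(k)/k\to 2-\alpha(d,\lambda)$ and $u(k)/k\to 2-\alpha(d,\lambda)$; hence
\[
2-\alpha(d,\lambda)\le \liminf_{k\to\infty}\frac{\zeta(k,d,\lambda)}{k}\le \limsup_{k\to\infty}\frac{\zeta(k,d,\lambda)}{k}\le 2-\alpha(d,\lambda),
\]
which is exactly the assertion. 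Thus everything reduces to checking that the upper and lower bounds of Theorem~\ref{thCyclic} share the slope $2-\alpha(d,\lambda)$.

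To see where this slope comes from, I would pass to the moment curve and to the alternating oriented matroid, using the Radon-partition description of complete Kneser transversals from Section~\ref{radon}. Label the $n$ vertices $1<2<\dots<n$ in their order along the curve. A complete Kneser $(d-\lambda)$-transversal is the affine hull $L=\aff(S)$ of a set $S$ of $d-\lambda+1$ vertices, and $L$ is transversal to $\conv(T)$ for every $k$-set $T$ precisely when no hyperplane containing $S$ leaves $k$ or more of the remaining $N:=n-(d-\lambda+1)$ points strictly on one open side. Since a hyperplane meets the moment curve in at most $d$ points and $d-\lambda+1$ of these are pinned at $S$, it can cross the curve transversally in at most $\lambda-1$ further points; these crossings cut the $N$ free points into arcs whose open sides alternate. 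Existence of the transversal thus becomes a min-max game: the builder places $S$ (equivalently, prescribes the $d-\lambda+1$ forced crossings) so as to spread the free points, while the adversary chooses the at most $\lambda-1$ additional crossings together with the global sign so as to overload one open side.

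The main obstacle is the sharp analysis of this game, which is precisely the content of Theorem~\ref{thCyclic}. The adversary has two competing strategies, namely flipping an entire arc by placing a crossing at its endpoint, or splitting a heavy arc and cascading the sign change through all later arcs, and balancing these against an optimal, nearly uniform spreading of the free points gives a builder-optimal worst side asymptotic to $N/(2-\alpha(d,\lambda))$; the parity bookkeeping of the covectors of the alternating matroid is what produces the $\lceil d/2\rceil$ in the denominator of $\alpha$. Setting this worst side equal to the transversality threshold $k$ and recalling that $n=N+(d-\lambda+1)$ yields $n\sim (2-\alpha(d,\lambda))\,k$, that is, both bounds of Theorem~\ref{thCyclic} indeed have slope $2-\alpha(d,\lambda)$. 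Once that is in hand, the squeeze of the first paragraph finishes the proof; the limit argument itself is routine, and the real difficulty lies entirely in pinning down the leading term of the two bounds.
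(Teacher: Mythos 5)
Your first paragraph is exactly the paper's strategy: squeeze $\zeta(k,d,\lambda)/k$ between the two bounds of Theorem~\ref{thCyclic} and check that both have asymptotic slope $2-\alpha(d,\lambda)$. For the upper bound this is immediate from $Z(k,d,\lambda)=(d-\lambda+1)+\flo{(2-\alpha(d,\lambda))(k-1)}$. The genuine gap is that you never actually verify the corresponding claim for the lower bound, and that is the only step in this proof requiring real work. The quantity $z(k,d,\lambda)$ is a maximum of finitely many terms, the term indexed by $j$ growing like $\frac{j}{\beta(\lambda,j)}\,k=\frac{2j}{j+\lambda-1}\,k$, and the maximum runs only over those $j\in\{\lambda+1,\ldots,d-\lambda+2\}$ for which $j+\lambda$ is \emph{odd}. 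Whether the achievable slope equals $2-\alpha(d,\lambda)$ therefore hinges on which $j$ this parity constraint allows: the paper splits into cases, taking $j=d-\lambda+2$ when $d$ is odd and $j=d-\lambda+1$ when $d$ is even, and checks in each case that $\frac{2j}{j+\lambda-1}=2-\frac{\lambda-1}{\cei{\frac{d}{2}}}$, using $\cei{\frac{d}{2}}=\frac{d+1}{2}$, respectively $\frac{d}{2}$. This is a short computation, but it is the content of the theorem, and your proposal replaces it by assertion (``the crux is that their leading coefficients in $k$ both equal $2-\alpha(d,\lambda)$'').

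Your second and third paragraphs do not fill this gap. Instead of using the \emph{statement} of Theorem~\ref{thCyclic}, which you are entitled to cite since it is proved earlier in the paper, they sketch a re-derivation of it via a min-max ``game'' on the moment curve; the decisive step --- ``balancing these \ldots gives a builder-optimal worst side asymptotic to $N/(2-\alpha(d,\lambda))$'' --- is precisely the claim to be proved, produced without any computation, and ``the parity bookkeeping \ldots is what produces the $\cei{\frac{d}{2}}$'' names the phenomenon rather than establishing it. Indeed, the parity issue you wave at is exactly the constraint ($j+\lambda$ odd) that forces the case split above and could, in principle, have lowered the attainable slope; that it does not is what must be checked. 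With the parity-dependent choice of $j$ and the two-line identity inserted, your squeeze argument is complete and coincides with the paper's proof.
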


As an easy consequence, we have
$$
\lim_{d\to\infty}\lim_{k\to\infty}\frac{\zeta(k,d,\lambda)}{k}=2.
$$
Also, by combining Theorem \ref{asymp1} with a result of Bukh and Matousek \cite{BM}, we verify that if $\alpha(d,\lambda)< 1$, then $m(k,d,2)$ is not necessarily equal to $m^*(k,d,2)$ (see Remark \ref{rem:mm}).
\medskip
Finally, in Section~\ref{exact} we present some exact values of $m^*$. Among other, we prove the following two results.

\begin{theorem}\label{exactvaluesm^*}
Let $\alpha(d,\lambda)<1$ and $\frac{d}{2}\leq k < \frac{2}{1-\alpha(d,\lambda)}$. Then,
$$
m^*(k,d,\lambda)=(d-\lambda+1)+k.
$$
\end{theorem}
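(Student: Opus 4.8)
The lower bound needs no work: since $\alpha(d,\lambda)<1$, Theorem~\ref{lowerbound} already gives $(d-\lambda+1)+k\le m^*(k,d,\lambda)$, so the entire task is the matching upper bound $m^*(k,d,\lambda)\le (d-\lambda+1)+k$. Because $m^*(k,d,\lambda)\le\zeta(k,d,\lambda)$, it is enough to prove $\zeta(k,d,\lambda)\le(d-\lambda+1)+k$; writing $p=d-\lambda+1$ and $N=p+k+1$, this means the vertex set of the cyclic polytope $C_d(N)$ carries no complete Kneser $(d-\lambda)$-transversal to the convex hulls of its $k$-sets. I would isolate this as the core claim and prove it inside the alternating oriented matroid, the setting prepared for $\zeta$ in Section~\ref{cyclic} and the engine behind the upper bound of Theorem~\ref{thCyclic}.

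The first step is a combinatorial criterion for transversality. Fix a $(d-\lambda+1)$-subset $A$ of the vertices and set $L=\aff(A)$. For a $k$-set $S$ meeting $A$ the condition $L\cap\conv(S)\neq\emptyset$ is automatic, so only $k$-sets $S$ disjoint from $A$ matter. For such $S$, since $\dim L=d-\lambda\ge 1$, one has $L\cap\conv(S)=\emptyset$ iff some hyperplane contains $\aff(A)$ and leaves $S$ strictly on one open side. Encoding hyperplanes through the moment curve, such a hyperplane is a polynomial $\pi(t)=q(t)\prod_{i\in A}(t-t_i)$ of degree at most $d$, hence $\deg q\le\lambda-1$; keeping $S$ on one side means realizing a prescribed sign pattern $\sigma\,\epsilon_j$ on $S$ by $q$, where $\epsilon_j=(-1)^{|\{i\in A\,:\,t_i>t_j\}|}$. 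As a nonzero polynomial of degree at most $\lambda-1$ produces precisely the sign patterns with at most $\lambda-1$ sign changes, I get the clean statement: $L$ is transversal to $\conv(S)$ iff $(\epsilon_j)_{j\in S}$ has at least $\lambda$ sign changes. Propositions~\ref{propMinTr} and~\ref{propRadTr} are what license this passage to the order type and to the Radon/oriented-matroid bookkeeping.

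It remains a counting problem. Let $B$ be the set of the $k+1$ vertices outside $A$, carrying the $\pm$ sequence $(\epsilon_j)_{j\in B}$; spreading the $p$ indices of $A$ over the gaps of $B$ shows this sequence realizes any number of sign changes up to $p$ and no more. Each $k$-set inside $B$ is $B$ with one vertex removed, and removing one vertex lowers the sign-change count by at most $2$; thus a complete Kneser transversal through $A$ exists iff every single removal still leaves at least $\lambda$ changes, and I must exclude this for every $A$. Here the hypothesis bites: $d/2\le k$ forces $k\ge\lceil d/2\rceil$, and together with $k\,(1-\alpha(d,\lambda))<2$ this forces $\lceil d/2\rceil-\lambda+1=1$, that is $\lambda=\lceil d/2\rceil$, so that $p=\lfloor d/2\rfloor+1$ and $\lceil d/2\rceil+1\le k\le 2\lceil d/2\rceil-1$. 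Two remarks then close the count. Since $\lfloor(k-1)/2\rfloor\le\lambda-1$, any sequence on $B$ all of whose blocks have length $\ge 2$ carries at most $\lambda-1$ sign changes, so these configurations already admit a good removal. And when the sign-change count is large, the bound $k+1\le 2\lceil d/2\rceil$ forbids all blocks from having length $\ge 2$ (and, for even $d$, forbids even all interior blocks from having length $\ge 2$), so a length-one block is always present and its removal drops the count by $2$ if interior or $1$ if extremal, reaching $\lambda-1$ or less. Hence no $A$ works and $C_d(N)$ has no complete Kneser transversal.

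The main obstacle is the criterion of the second paragraph: one must carefully identify separation of $S$ from $\aff(A)$ with the existence of the polynomial $\pi$, and pin down exactly which sign patterns a degree-$\le\lambda-1$ polynomial can realize on an ordered point set. Once that is secured, the deletion count is elementary, but it is worth treating the two parities of $d$ separately at the top of the range $k=2\lceil d/2\rceil-1$, where for odd $d$ the extremal configuration only forces a drop of $1$ and the value $p-1=\lambda-1$ is attained on the nose.
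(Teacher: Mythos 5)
Your proof is correct, but it takes a genuinely different route from the paper's. The paper disposes of this theorem as a quick corollary of machinery it has already built: the lower bound is Theorem~\ref{lowerbound}, the upper bound is $m^*(k,d,\lambda)\le\zeta(k,d,\lambda)\le Z(k,d,\lambda)=(d-\lambda+1)+\flo{(2-\alpha(d,\lambda))(k-1)}$ from Theorem~\ref{thCyclic}, and the whole proof is the arithmetic verification that the hypotheses (together with the standing assumption $k\ge\lambda+1$) are exactly what is needed to force $\flo{(2-\alpha(d,\lambda))(k-1)}=k$. You bypass Theorem~\ref{thCyclic} entirely and re-prove the needed upper bound on $\zeta$ from scratch. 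Your separation criterion --- $\aff(A)$ meets $\conv(S)$ if and only if $(\epsilon_j)_{j\in S}$ has at least $\lambda$ sign changes, because hyperplanes containing $A$ correspond to nonzero polynomials $q(t)\prod_{i\in A}(t-t_i)$ with $\deg q\le\lambda-1$ --- is a faithful polynomial-language equivalent of the paper's oriented-matroid bookkeeping, namely \eqref{thRadCyc} combined with Propositions~\ref{propMinTr} and~\ref{propRadTr}, which is precisely how the paper proves Proposition~\ref{propIneq}; sign changes of $(\epsilon_j)$ correspond to the paper's parity blocks of gap indices. The real payoff of your route is a structural observation the paper never makes explicit: the hypotheses force $\lambda=\cei{\frac{d}{2}}$, so the theorem lives entirely on the boundary of the non-trivial range, where $d-\lambda+1$ equals $\lambda$ (odd $d$) or $\lambda+1$ (even $d$) and the block-counting --- including the delicate even-$d$ case with $\lambda+1$ sign changes, which you handle correctly via interior singleton blocks --- is elementary. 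What the paper's route buys is brevity and reuse; what yours buys is a self-contained proof and a sharper picture of why exactly this range of $k$ works.

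Two small patches are needed. First, you reduce $\zeta(k,d,\lambda)\le(d-\lambda+1)+k$ to the non-existence of a complete Kneser transversal for the single value $N=(d-\lambda+1)+k+1$, but $\zeta$ is a maximum over all $n$, so a priori some larger cyclic polytope could still admit a transversal. You need the one-line monotonicity remark: if a cyclic polytope with $n>N$ vertices had a complete Kneser transversal $\aff(T)$, then deleting $n-N$ vertices outside $T$ leaves $N$ points on the moment curve for which $\aff(T)$ is still a complete Kneser transversal; hence excluding $N$ excludes all $n\ge N$. (The paper's Proposition~\ref{propIneq} avoids this issue by bounding $|V|$ for an arbitrary $n$ directly.) Second, your inequality $k\ge\cei{\frac{d}{2}}+1$ comes from the standing assumption $k\ge\lambda+1$ made in the introduction, not from the stated hypothesis $\frac{d}{2}\le k$ alone; this assumption is genuinely necessary (for $k=\lambda=\cei{\frac{d}{2}}$ the conclusion fails by Proposition~\ref{propEasy}), and it is consumed by the lower-bound half, Theorem~\ref{lowerbound}, which you invoke just as the paper does --- worth stating explicitly so the reader sees where it enters.
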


\begin{theorem}\label{m^*(k,d,1}
$m^*(k,d,1)=d+2(k-1)$.
\end{theorem}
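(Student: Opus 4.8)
The plan is to prove the two inequalities $m^*(k,d,1)\le d+2(k-1)$ and $m^*(k,d,1)\ge d+2(k-1)$ separately. The upper bound is immediate: every complete Kneser transversal is in particular a Kneser transversal, so $m^*(k,d,1)\le m(k,d,1)$, and the right-hand inequality of~(\ref{ineq1}) with $\lambda=1$ gives $m(k,d,1)<d+2(k-1)+1$, hence $m(k,d,1)\le d+2(k-1)$. The whole difficulty therefore lies in the lower bound.

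For the lower bound I would first reformulate the target. Write $n=d+2(k-1)$. A hyperplane $H$ (a $(d-1)$-plane) meets $\conv(S)$ for a $k$-set $S$ if and only if $S$ is not contained in one of the two open halfspaces bounded by $H$; consequently $H$ is a Kneser transversal of a set $X$ precisely when each open halfspace bounded by $H$ contains at most $k-1$ points of $X$. Thus a complete Kneser $(d-1)$-transversal is exactly a hyperplane passing through at least $d$ points of $X$ and leaving at most $k-1$ points strictly on each side. Since $\left|X\right|=d+2(k-1)$, in general position such a hyperplane passes through exactly $d$ points with exactly $k-1$ points strictly on each side, i.e.\ it is a \emph{halving facet}. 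So the lower bound reduces to showing that every $n$-point set admits such a hyperplane.

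I would prove the existence of a halving facet first for $X$ in general position, by induction on $d$. For $d=1$ a hyperplane is a point and $X$ consists of $2k-1$ collinear points, whose median leaves $k-1$ points on each side. For the inductive step, pick a vertex $p$ of $\conv(X)$; after translating $p$ to the origin all other points lie in an open halfspace $\{\langle w,\cdot\rangle>0\}$, and the central projection $x\mapsto x/\langle w,x\rangle$ sends $X\setminus\{p\}$ bijectively onto a set $\bar X$ of $(d-1)+2(k-1)$ points in general position inside $\{\langle w,\cdot\rangle=1\}\cong\R^{d-1}$ (two points with the same image, or $d$ images on a common $(d-2)$-flat, would force $d+1$ points of $X$ on a hyperplane). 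By induction $\bar X$ has a halving facet: a $(d-2)$-flat $F$ through $d-1$ of its points with $k-1$ points of $\bar X$ strictly on each side. The hyperplane $H$ spanned by $F$ and $p$ then contains $p$ together with the $d-1$ corresponding points of $X$, and since $\bar x$ is a positive multiple of $x$ the two sides of $F$ match the two sides of $H$; hence $H$ carries $k-1$ points of $X$ strictly on each side and is the desired halving facet.

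Finally I would remove the general-position hypothesis by perturbation and a limiting argument. Perturb $X$ to a set $X_\varepsilon$ in general position with the same cardinality; by the previous step $X_\varepsilon$ has a halving facet $H_\varepsilon$, parametrized by a unit normal and an offset in the compact space $S^{d-1}\times\R$ (the offset is bounded because $H_\varepsilon$ meets the bounded set $X_\varepsilon$). Passing to a convergent subsequence as $\varepsilon\to0$, the $d$ defining points converge to $d$ distinct points of $X$ and $H_\varepsilon$ converges to a genuine hyperplane $H$ containing them, so $H$ carries at least $d$ points of $X$. Since strict inequalities survive small perturbations, any point of $X$ lying strictly on one open side of $H$ also lies strictly on the corresponding side of $H_\varepsilon$ for small $\varepsilon$, so each open side of $H$ carries at most $k-1$ points; thus $H$ is a complete Kneser $(d-1)$-transversal and $m^*(k,d,1)\ge d+2(k-1)$. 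I expect the main obstacle to be precisely this degenerate case: one must verify that collapsing points onto $H$ in the limit can only help — it may add points to $H$ and remove points from the open sides — which is exactly why the ``at least $d$ points / at most $k-1$ per side'' formulation is robust, whereas the exact-count formulation used in general position is not.
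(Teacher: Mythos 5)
Your proof is correct, and it takes a genuinely different route from the paper's on both halves. Both proofs rest on the same reformulation: since a compact convex set misses a hyperplane $H$ precisely when it lies in one of the two open half-spaces, a complete Kneser $(d-1)$-transversal of $X$ is exactly a hyperplane through at least $d$ points of $X$ with at most $k-1$ points of $X$ in each open half-space. For the upper bound, you combine $m^*(k,d,1)\le m(k,d,1)$ with the right-hand inequality of (\ref{ineq1}), which is a result quoted from \cite{ABMR}; the paper instead keeps the bound internal, deducing it from its cyclic-polytope estimate $m^*(k,d,1)\le \zeta(k,d,1)\le Z(k,d,1)=d+2(k-1)$ of Theorem~\ref{thCyclic}. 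For the lower bound, the paper uses a sweep argument: fix $d-1$ vertices $S$ lying on a facet $F$ of $\conv(X)$, start from $\aff(F)$ (which has all of $X$ on one side) and rotate about $\aff(S)$ until at most $k-1$ points lie in each open half-space; this applies directly to degenerate configurations but is stated in a single informal sentence (``we may now rotate continuously''). You instead prove existence of a halving facet by induction on $d$ via central projection from a vertex of $\conv(X)$, which requires general position, and then handle degenerate sets by perturbation and compactness, exploiting the one-sided robustness of the conditions ``at least $d$ points on $H$, at most $k-1$ strictly on each side''. What each buys: the paper's sweep is shorter and needs no genericity assumption; your argument is longer but every step is checkable, and it makes explicit the degeneracy issue that the paper's rotation glosses over. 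Two small points to tighten: in the limiting argument you should pass to a further subsequence along which the index set of the $d$ points of $X_\varepsilon$ spanning $H_\varepsilon$ is constant, so that they converge to $d$ \emph{distinct} points of $X$; and your parenthetical justification that projection degeneracies contradict general position needs $|X|\ge d+1$, which fails only when $k=1$ --- that trivial case should be split off separately, as the paper does (it is in any case covered by the paper's standing assumption $k\ge\lambda+1$).
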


\section{Kneser transversals from Radon partitions}\label{radon}

Let $d$ be a positive integer. Consider $d+2$ points $v_1$, $v_2$, $\ldots$, $v_{d+2}$ in general position in $\R^d$. Radon's theorem states that there exists a unique partition $\{1,2,\ldots,d+2\}=A\cup B$ such that
$$
\conv\left(\bigcup_{i\in A}{v_i}\right)\cap\conv\left(\bigcup_{i\in B} v_i\right)\neq \emptyset.
$$
Moreover, Radon's theorem states that this intersection is a unique point in the interior of each convex hull.

The following proposition is a generalization of the well-known Carath\'{e}odory's theorem that states that if a point $p$  lies in the convex hull of a set $S$ in  $\R^d$, then there is a subset $S'$ of $S$ consisting of at most $d+1$ points such that $p$ lies in the convex hull of $S′$. It is not difficult to prove that the set $S'$ has exactly $d+1$ points if the set $S$ is in general position in  $\R^d$.

\begin{proposition}\label{propMinTr}
Let $d$ and $\lambda$ be positive integers with $d\ge\lambda$ and let $S$ and $T$ be two disjoint sets of points in general position in $\R^d$ such that $|S|\ge\lambda+1$ and $|T|=d-\lambda+1$. Then the following two statements are equivalent:	
\begin{itemize}
\item
$\conv(S)\cap \aff(T)\neq 0$,
\item
$\conv(S')\cap \aff(T)\neq 0$ for a subset $S'\subseteq S$ such that $|S'|=\lambda+1$.
\end{itemize}
\end{proposition}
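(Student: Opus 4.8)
The plan is to treat the two implications separately, with essentially all of the content residing in the forward direction. The implication from the second statement to the first is immediate: if $S'\subseteq S$ satisfies $\conv(S')\cap\aff(T)\neq\emptyset$, then since $\conv(S')\subseteq\conv(S)$ we obtain $\conv(S)\cap\aff(T)\neq\emptyset$ at once. So I would concentrate on the forward direction, whose natural strategy is to reduce it to the classical Carath\'eodory theorem by projecting out the plane $\aff(T)$.

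Concretely, I would pass to homogeneous coordinates, writing $(x,1)\in\R^{d+1}$ for a point $x\in\R^{d}$. Since the $d-\lambda+1$ points of $T$ are in general position they are affinely independent, so the vectors $\{(t,1):t\in T\}$ are linearly independent and span a subspace $V_{T}\subseteq\R^{d+1}$ of dimension $d-\lambda+1$. Let $\phi\colon\R^{d+1}\to\R^{d+1}/V_{T}$ be the quotient map onto a space of dimension $(d+1)-(d-\lambda+1)=\lambda$, and set $u_{s}=\phi(s,1)$ for $s\in S$. A short bookkeeping with the last (homogenizing) coordinate shows that $\conv(S)\cap\aff(T)\neq\emptyset$ is equivalent to the existence of coefficients $\alpha_{s}\geq 0$ with $\sum_{s}\alpha_{s}=1$ and $\sum_{s}\alpha_{s}u_{s}=0$, that is, to $0\in\conv\{u_{s}:s\in S\}$ inside $\R^{\lambda}$. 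The point of the homogeneous lift is precisely that membership in the affine hull $\aff(T)$, rather than the convex hull, leaves the $T$-coefficients unconstrained in sign, which is what lets $V_{T}$ be collapsed entirely.

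Once the problem reads $0\in\conv\{u_{s}:s\in S\}$ in $\R^{\lambda}$, Carath\'eodory's theorem produces a subset $S''\subseteq S$ with $|S''|\leq\lambda+1$ and $0\in\conv\{u_{s}:s\in S''\}$, hence $\conv(S'')\cap\aff(T)\neq\emptyset$; this Carath\'eodory step can equivalently be run as a direct minimal-support argument, eliminating a point whenever the $u_{s}$ are affinely dependent, which is the Radon-type exchange underlying the section. To reach a subset of size \emph{exactly} $\lambda+1$, I would simply enlarge $S''$: since $|S|\geq\lambda+1$, I can choose $S'$ with $S''\subseteq S'\subseteq S$ and $|S'|=\lambda+1$, and monotonicity of the convex hull preserves $\conv(S')\cap\aff(T)\neq\emptyset$.

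In terms of difficulty there is no single hard step; the work lies in setting the reduction up correctly. The two things to get right are the dimension count, which is where the general position of $T$ enters by guaranteeing $\dim V_{T}=d-\lambda+1$ and hence a $\lambda$-dimensional quotient, so that Carath\'eodory yields precisely the bound $\lambda+1$; and the affine-versus-convex bookkeeping in the homogeneous coordinates. I would note in passing that general position of $S$ is not actually needed for this equivalence, and that the padding argument is what upgrades Carath\'eodory's ``at most $\lambda+1$'' to the stated ``exactly $\lambda+1$''.
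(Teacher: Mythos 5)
Your proof is correct and takes essentially the same route as the paper: both arguments collapse $\aff(T)$ to a point so that the question becomes a Carath\'eodory problem in a $\lambda$-dimensional space --- the paper via orthogonal projection onto a $\lambda$-dimensional affine subspace perpendicular to $\aff(T)$, you via a linear quotient of the homogeneous lift by the span of $\{(t,1):t\in T\}$. Your explicit padding step upgrading ``at most $\lambda+1$'' to ``exactly $\lambda+1$'' is a detail the paper glosses over, but the core reduction and the use of Carath\'eodory are identical.
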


\begin{proof}
The second statement clearly implies the first one. Now let $p\in\conv(S)\cap \aff(T)$ and let $U$ be the affine subspace through $p$ that is perpendicular to $\aff(T)$. Consider $S_U$ the projection of $S$ to $U$. We have that $p\in\conv\left(S_U\right)$ and that $U$ has dimension $\lambda$. Therefore, by Carath\'{e}odory's theorem, there is a $(\lambda+1)$-set of $S_U$ such that $p$ lies in its convex hull. This corresponds to a subset $S'$ of $S$ with $\lambda+1$ elements such that $\conv(S')\cap \aff(T)\neq 0$, as desired.
\end{proof}

The following result will be very useful for the rest of the paper.

\begin{proposition}\label{propRadTr}
Let $S$ and $T$ be two disjoint and non-empty sets of points in $\R^d$ such that $|S|+|T|=d+2$ and $S\cup T$ is in general position. Then $\conv(S)\cap\aff(T)\neq \emptyset$ if and only if all the points of $S$ are in the same set in the Radon partition of $S\cup T$.
\end{proposition}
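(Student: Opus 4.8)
The plan is to reduce both sides of the equivalence to a statement about the \emph{unique} affine dependence among the $d+2$ points of $S\cup T$. Since these points are in general position and number exactly $d+2$, the space of affine dependences $\{(\lambda_i): \sum_i \lambda_i v_i = 0,\ \sum_i \lambda_i = 0\}$ is one-dimensional. Writing this dependence as $\sum_{s\in S}\beta_s\, s + \sum_{t\in T}\gamma_t\, t = 0$ with $\sum_{s}\beta_s + \sum_{t}\gamma_t = 0$, general position forces every coefficient to be nonzero (a vanishing coefficient would make the remaining $d+1$ points affinely dependent, contradicting general position). The key dictionary is that the Radon partition is obtained precisely by splitting the indices according to the sign of the coefficient in this dependence; hence \emph{all points of $S$ lie in the same part of the Radon partition} if and only if \emph{all the $\beta_s$ share a common sign}.

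For the forward direction, I would start from a point $p\in\conv(S)\cap\aff(T)$ and write it both as $p=\sum_{s}\mu_s\, s$ with $\mu_s\geq 0$ and $\sum_s\mu_s = 1$, and as $p=\sum_{t}\nu_t\, t$ with $\sum_t\nu_t = 1$. Subtracting gives $\sum_s\mu_s\, s - \sum_t\nu_t\, t = 0$ with coefficient sum $1-1=0$, i.e.\ an affine dependence, which is nontrivial because $\sum_s\mu_s = 1$. By the one-dimensionality of the dependence space it must equal a scalar multiple $c\cdot(\beta_s,\gamma_t)$, and $c\neq 0$ since $\sum_s\mu_s\neq 0$. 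Then $\mu_s = c\beta_s\geq 0$ together with $\beta_s\neq 0$ forces all the $\beta_s$ to carry the common sign of $c$, which by the dictionary says exactly that all of $S$ lies in one part of the Radon partition.

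For the converse I would run this computation in reverse. Assuming all $\beta_s$ have the same sign, I rescale the dependence so that $\sum_s\beta_s = 1$ (legitimate since this sum is nonzero), and set $\mu_s := \beta_s\geq 0$ and $\nu_t := -\gamma_t$. One then checks $\sum_t\nu_t = -\sum_t\gamma_t = \sum_s\beta_s = 1$ and $\sum_s\mu_s\, s = -\sum_t\gamma_t\, t = \sum_t\nu_t\, t$, so the common value is a point lying in both $\conv(S)$ and $\aff(T)$, giving $\conv(S)\cap\aff(T)\neq\emptyset$.

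I do not anticipate a serious obstacle. The only delicate points are confirming that the combined dependence is genuinely nontrivial (ensured by the normalization $\sum_s\mu_s = 1$) and that the proportionality scalar $c$ cannot vanish; after that the sign bookkeeping is routine. The real content of the argument is simply the observation that membership in $\conv(S)$ and membership in $\aff(T)$ can be merged into a single affine dependence, which lets the uniqueness of that dependence do all the work.
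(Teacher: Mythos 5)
Your proof is correct, and at bottom it runs on the same engine as the paper's: both directions merge the convex combination over $S$ and the affine combination over $T$ into a single affine dependence among the $d+2$ points, and then let uniqueness, coming from general position, force the conclusion. The difference is in how that uniqueness is packaged. The paper works directly with Radon partitions: in one direction it splits the Radon coefficients and rescales to exhibit a point of $\conv(S)\cap\aff(T)$; in the other it rearranges the intersection identity into a convex-combination identity on both sides, notes that this induces a Radon partition, and appeals to uniqueness of the Radon partition to conclude ``this must be the same partition, therefore $\alpha_i=\beta_i$.'' You instead isolate the linear-algebraic core as an explicit lemma --- for $d+2$ points in general position the space of affine dependences is one-dimensional and every coefficient of the generating dependence is nonzero --- and translate both sides of the equivalence into sign conditions on that unique dependence. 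This buys two small but genuine gains in rigor over the paper's write-up. First, your proportionality step ($\mu_s=c\beta_s$ with $c\neq 0$) makes precise what the paper asserts loosely: the coefficients obtained from the intersection point are only \emph{proportional} to the Radon coefficients, not equal to them as written. Second, the nonvanishing of all coefficients automatically rules out the boundary case where some $\mu_s=0$ (the intersection point lying on a proper face of $\conv(S)$); in the paper's argument such a point would sit ambiguously in the induced partition, a case its uniqueness appeal silently skips. The cost is that you must take as known the dictionary between the sign pattern of the unique dependence and the Radon partition, but that is exactly the standard proof of Radon's theorem, which the paper also assumes.
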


\begin{proof}
Let $S\cup T=\{v_1,v_2,\ldots, v_{d+2}\}$ and let $A\cup B$ be the partition of $\{1,\ldots,d+2\}$ that yields the Radon partition for $S\cup T$. This means that there exist positive real numbers $\alpha_1$, $\ldots$, $\alpha_{d+2}$ such that
$$
\begin{array}{l}
\displaystyle\sum_{i\in A} \alpha_i v_i = \sum_{i\in B} \alpha_i v_i, \\ \ \\
\displaystyle\sum_{i\in A} \alpha_i = \sum_{i\in B} \alpha_i = 1. \\
\end{array}
$$
First suppose that all the elements from $S$ are in the same part in the Radon partition of $S\cup T$. Without loss of generality, this means that if $v_i\in S$, then $i\in A$. We may therefore write:
$$
\sum_{i\in A, v_i\in S} \alpha_i v_i = \sum_{i\in B} \alpha_i v_i - \sum_{i\in A, v_i\in T} \alpha_i v_i.
$$
Dividing the both sides of the equality by $\sum_{i\in A, v_i\in S} \alpha_i$, we get on the LHS a convex linear combination of elements in $S$ and on the RHS an affine linear combination of elements in $T$. This shows that $\conv(S)\cap\aff(T)\neq \emptyset$.

Now suppose that $\conv(S)\cap\aff(T)\neq \emptyset$. This means that there exist real numbers $\beta_i$ such that
$$
\begin{array}{l}
\beta_i\geq 0 \text{ when } v_i\in S, \\ \ \\
\displaystyle\sum_{v_i\in S} \beta_i v_i = \sum_{v_i\in T} \beta_i v_i, \\ \ \\
\displaystyle\sum_{v_i\in S} \beta_i = \sum_{v_i\in T} \beta_i = 1. \\
\end{array}
$$
We may rearrange the sum as
$$
\sum_{v_i\in S} \beta_i v_i + \sum_{v_i\in T, \beta_i<0} (-\beta_i) v_i = \sum_{v_i\in T, \beta_i\geq 0} \beta_i v_i\\
$$
and dividing both sides of the equality by $\sum_{v_i\in T, \beta_i\geq 0} \beta_i$ we get a convex linear combination on both sides. This induces a Radon partition of $S\cup T$. Since the points are in general position, this must be the same partition as the one induced by $A\cup B$. Therefore $\alpha_i=\beta_i$ and all the $\alpha_i$'s corresponding to points in $S$ are in the same part.
\end{proof}

\begin{remark}\label{remNoGen}
In the case in which $S\cup T$ is not in general position then we still have a Radon partition, but it might not be unique. If in one of those Radon partitions we have that all the points from $S$ belong to the same set, then the proof above shows that we also conclude that $\text{Conv}(S)\cap \text{aff}(T)\neq \emptyset$.
\end{remark}

As a consequence of the above proposition and remark we get the following lemma. 

\begin{lemma}\label{convaff3}
Let $X$ be any set of $d+2$ distinct points in $\R^d$ and let $\flo{\frac{d+2}{2}}\le t\le d+1$. Then $X$ can be partitioned into two disjoint sets $S$ and $T$ such that $|T|=t$ and $\conv(S)\cap\aff(T)\neq \emptyset$.
\end{lemma}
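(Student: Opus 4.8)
The plan is to produce $S$ and $T$ directly from a Radon partition of $X$ and then invoke Proposition~\ref{propRadTr}, using Remark~\ref{remNoGen} to cover the case where $X$ is not in general position. Since $X$ consists of $d+2$ points in $\R^d$, Radon's theorem guarantees a partition $X=A\cup B$ into two disjoint sets whose convex hulls meet. Relabelling if necessary, I may assume $|A|\le|B|$, which forces $|B|\ge\cei{\frac{d+2}{2}}$.

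Next I would set $s:=d+2-t$ and look for a set $S$ of size $s$ contained entirely inside $B$. The hypothesis $t\ge\flo{\frac{d+2}{2}}$ gives $s=d+2-t\le d+2-\flo{\frac{d+2}{2}}=\cei{\frac{d+2}{2}}\le|B|$, so such an $S$ indeed exists. Moreover $t\le d+1$ forces $s\ge1$, so $S$ is nonempty, while $T:=X\setminus S$ has the prescribed size $|T|=t\ge1$ and is also nonempty. By construction every point of $S$ lies in the single Radon part $B$, whereas $T$ absorbs all of $A$ together with $B\setminus S$.

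Finally, since $S$ and $T$ are disjoint nonempty sets with $|S|+|T|=d+2$ and all points of $S$ belong to the same side of the Radon partition of $X$, Proposition~\ref{propRadTr} yields $\conv(S)\cap\aff(T)\neq\emptyset$ when $X$ is in general position, and Remark~\ref{remNoGen} gives exactly the same conclusion otherwise. This is the required partition.

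The argument is short and I do not anticipate a genuine obstacle. The only points needing care are the two inequalities bounding $|S|$, which are precisely where the admissible range $\flo{\frac{d+2}{2}}\le t\le d+1$ is consumed (the lower bound guarantees $S$ fits inside the larger Radon part, the upper bound guarantees $S$ is nonempty), and the small bookkeeping observation that taking $S$ to be a proper subset of $B$, rather than all of $B$, still satisfies the hypothesis that the points of $S$ lie in one part, so Proposition~\ref{propRadTr} and Remark~\ref{remNoGen} remain applicable.
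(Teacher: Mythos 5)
Your proof is correct and follows essentially the same route as the paper: take a Radon partition of $X$, keep $S$ inside the larger part, let $T$ absorb the rest, and conclude via Proposition~\ref{propRadTr} together with Remark~\ref{remNoGen}. The only cosmetic differences are that the paper splits into the cases $|T|$ equal to or larger than the small Radon part, whereas you handle both uniformly, and you are slightly more explicit than the paper in citing Remark~\ref{remNoGen} for point sets not in general position.
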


\begin{proof}
By Radon's theorem the set $X$ can be partitioned into two disjoint sets, $A$ and $B$, whose convex hulls intersect. We may suppose that $|B|\le\flo{\frac{d+2}{2}}\le t$, since $|A|+|B|=d+2$. If $|B|=t$, as $\conv(A)\cap\conv(B)\neq\emptyset$, we have already finished the proof. If $|B|<t$, let $Y\subseteq A$ be such that $|B|+|Y|=t$. Define $T=B\cup Y$ and $S=A\setminus Y$, hence by Proposition~\ref{propRadTr} we conclude that $\conv(S)\cap\aff(T)\neq\emptyset$.
\end{proof}

\subsection{A lower bound for $m^*(k,d,\lambda)$ in the non-trivial range $\alpha(d,\lambda)<1$}

\begin{proof}[Proof of Theorem~\ref{lowerbound}]
Let $X$ be a collection of $(d-\lambda+1)+k$ points in $\R^d$. Since $k\geq \lambda+1$, then $|X|\geq d+2$. Let $Y$ be a $(d+2)$-subset of $X$.

Since we are in the case $\alpha(d,\lambda)<1$, we have that $\flo{\frac{d+2}{2}}\leq d-\lambda+1\leq d+1$. Therefore, by Lemma~\ref{convaff3} we can give a partition of $Y$ into two disjoint sets $S$ and $T$ such that $|T|=d-\lambda+1$ and $\conv(S)\cap\aff(T)\neq\emptyset$.

We claim that $\text{aff}(T)$ is a Kneser transversal for $X$. If a $T$ intersects a $k$-set, then $\text{aff}(T)$ is clearly a transversal to its convex hull. Since $X$ has $(d-\lambda+1)+k$ points, there is exactly one $k$-subset that does not intersect $T$: the complement of $T$ in $X$. But this $k$-set contains $S$, for which we know $\conv(S)\cap\aff(T)\neq\emptyset$. This shows that $\text{aff}(T)$ is a transversal to the convex hull of all $k$-sets.
\end{proof}

\section{Matroids and cyclic polytopes}\label{cyclic}

The \emph{moment curve} in $\R^d$ is defined parametrically as the map $\gamma: \R\to\R^d$, $t\mapsto (t,t^2,\ldots, t^d)$. A \emph{cyclic polytope} is the convex hull of a finite set of points on the moment curve. In this section we study the function $m^*$ for sets of vertices of cyclic polytopes. For basic notions of oriented matroids we refer the reader to \cite{BL}. The oriented matroids associated to cyclic polytopes on $n$ vertices of dimension $d$ are called \emph{alternating oriented matroids} and they are denoted by $A(r,n)$ with $r=d+1$. A well-known fact in oriented matroid theory is that the circuits of oriented matroid theory arising from a configuration of points can be interpreted as minimal Radon-partitions induced by the signs of the elements. For example, if we have the set of points $V=\{v_1,v_2,v_3,v_4,v_5,v_6\}$ in $\R^3$ and if $C=\{v_1,v_2,v_4,v_5,v_6\}$ is a signed circuit with $++--+$, this means that the sets $A=\{v_1,v_2,v_6\}$ and $B=\{v_4,v_5\}$ form a Radon partition, that is $\conv(A)\cap\conv(B)\neq\emptyset$.

Suppose that the ground set of $A(r,n)$ is $[n]$ and let $C$ be one of its circuits. A well-known fact \cite[Section 9.4]{BL} is that
\begin{align}
&\text{$|C|=r+1$ and if its elements are increasingly ordered, then they}\nonumber \\ 
&\text{are alternatively signed $+-+-\cdots$} \label{thRadCyc}
\end{align}
Therefore, minimal Radon partitions of cyclic polytopes are well understood.

Recall that $\zeta(k,d,\lambda)$ was defined as the maximum number of vertices that the cyclic polytope in $\R^d$ can have, so that it has a complete Kneser $(d-\lambda)$-transversal to the convex hulls of all its $k$-subsets of vertices.

We clearly have,
$$
m^*(k, d,\lambda)\leq \zeta(k,d,\lambda).
$$
We will give upper and lower bounds for $\zeta(k,d,\lambda)$. First we deal with some easy special cases. If $\lambda=0$, then any $d-0$ transversal is the whole space, and then we can have as many points as we want. Also, in the trivial range Theorem~\ref{cyceasy} states that the precise value of $\zeta(k,d,\lambda)$ and $m^*(k,d,\lambda)$ is $d-\lambda + k$. We now prove this.

\begin{proof}[Proof of Theorem~\ref{cyceasy}]
Clearly $d-\lambda+k\le m^*(k,d,\lambda)$ since every $(d-\lambda+1)$-set intersects all the $k$-sets for any set of $d-\lambda+k$ points in $\R^{d}$. Let $S$ be the cyclic polytope with $d-\lambda+1+k$ points in $\R^d$. Let $T\subseteq S$ be any set with $d-\lambda+1$ points, $K$ a subset of $S\setminus T$ with $k$ points and $K'$ a subset of $K$ with $\lambda+1$ points. By (\ref{thRadCyc}) the Radon partition of $T\cup K'$ can have at most $\cei{\frac{d+2}{2}}$ elements with the same sign. Since by hypothesis $|K'| = \lambda+1 >\cei{\frac{d+2}{2}}$, then $K'$ has at least two elements with different signs and hence by Proposition~\ref{propRadTr} we have $\conv(K')\cap\aff(T)=\emptyset$. Therefore $\conv(K)\cap\aff(T)=\emptyset$ by Proposition~\ref{propMinTr}.
\end{proof}

Let us define $\beta(\lambda,j)=\frac{j+\lambda-1}{2}$ for each integer $j$ such that $j+\lambda$ is an odd number. Let
$$
\begin{array}{l}
\displaystyle z(k,d,\lambda ) \overset{\mathrm{def}}{=} (d-\lambda+1) + \max_{\substack{j\in \{\lambda+1,\ldots,d-\lambda+2\}\\ j+\lambda \text{ is odd}}} \left(\flo{\frac{k-1}{\beta(\lambda,j)}}\cdot j + (k-1)_{\text{mod} \beta(\lambda,j)}\right) \\
\displaystyle Z(k,d,\lambda ) \overset{\mathrm{def}}{=} (d-\lambda+1) + \flo{(2-\alpha(d,\lambda))(k-1)} \\
\end{array}
$$
The rest of this section is devoted to prove the following bounds.

\begin{theorem}\label{thCyclic}
In the non-trivial range, when $\alpha(d,\lambda)< 1$,
$$
z(k,d,\lambda) \leq \zeta (k,d,\lambda) \leq Z(k,d,\lambda).
$$
\end{theorem}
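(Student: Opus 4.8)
The plan is to translate the geometric transversal condition into a purely combinatorial statement about the linear order of the vertices on the moment curve, and then to settle the resulting extremal problem by an explicit construction for the lower bound $z$ and an averaging argument for the upper bound $Z$. Throughout I fix a candidate set $T$ of $d-\lambda+1$ vertices and read off when $\aff(T)$ is a Kneser transversal. A $k$-set meeting $T$ is automatically transversal, so only $k$-sets disjoint from $T$ matter; for such a $K$, Proposition~\ref{propMinTr} (applicable since $k\ge\lambda+1$) reduces $\conv(K)\cap\aff(T)\neq\emptyset$ to the existence of a $(\lambda+1)$-subset $K'\subseteq K$ with $\conv(K')\cap\aff(T)\neq\emptyset$. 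As $|K'|+|T|=d+2$ and the vertices are in general position, Proposition~\ref{propRadTr} combined with (\ref{thRadCyc}) shows this holds exactly when, writing each point's \emph{gap-index} as the number of elements of $T$ below it, the gap-indices of $K'$ form a strictly increasing sequence with odd consecutive differences (an ``odd chain'' of length $\lambda+1$). Recording $g_i$ for the number of vertices in the $i$-th of the $d-\lambda+2$ gaps determined by $T$, and using that the longest odd chain inside a family of gaps equals the number of parity-runs of that family, I conclude that $\aff(T)$ is a complete Kneser transversal if and only if every family of gaps with at most $\lambda$ parity-runs has total weight at most $k-1$. Hence $\zeta(k,d,\lambda)=(d-\lambda+1)+V$, where $V$ is the maximum of $\sum_i g_i$ over nonnegative integers in which every $\le\lambda$-run family of gaps has weight $\le k-1$.

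The combinatorial lemma I would isolate next is that among any $j$ consecutive gaps, a family with at most $\lambda$ runs contains at most $\beta(\lambda,j)=\frac{j+\lambda-1}{2}$ gaps when $j+\lambda$ is odd. This is a one-line counting argument: between consecutive chosen gaps an absent parity change costs span at least $2$ and a present one span at least $1$, so $\lambda$ runs over span $\le j-1$ force the bound. For the lower bound $z\le\zeta$ I would then, for each admissible $j$, place all weight on a single prefix block of $j$ consecutive gaps, assigning weight $q+r$ to one gap and $q$ to the others, where $q=\flo{\frac{k-1}{\beta(\lambda,j)}}$ and $r=(k-1)\bmod\beta(\lambda,j)$. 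Since restricting any $\le\lambda$-run family to a prefix block cannot increase its number of runs, such a family meets at most $\beta(\lambda,j)$ block gaps and so has weight at most $q\beta(\lambda,j)+r=k-1$, while the total weight is $qj+r$; optimizing over $j$ gives $V\ge z-(d-\lambda+1)$.

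For the upper bound $\zeta\le Z$ I would argue by averaging. Put $h=\cei{\frac d2}$, so that $\alpha(d,\lambda)=\frac{\lambda-1}{h}$ and the target $V\le\flo{(2-\alpha(d,\lambda))(k-1)}$ amounts to $\sum_i g_i\le\frac{2h-\lambda+1}{h}(k-1)$. I would establish this by exhibiting $\le\lambda$-run gap-sets $G_1,\dots,G_m$ with $m=2h-\lambda+1$ that cover each of the $d-\lambda+2$ gaps at least $h$ times: summing the constraints $\sum_{i\in G_l}g_i\le k-1$ over $l$ gives $h\sum_i g_i\le\sum_l\sum_{i\in G_l}g_i\le m(k-1)$, and the bound follows after taking the integer floor. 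Equivalently, by LP duality this is a fractional cover of the gaps by $\le\lambda$-run sets of total weight $2-\alpha(d,\lambda)$.

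The main obstacle is precisely this covering family. Each set must be pared down to at most $\lambda$ runs by deleting an independent set of interior gaps (each deletion merging the two equal-parity neighbours and so cutting two runs), and the deletions must be balanced so that every gap — including the two boundary gaps, which can only be removed as endpoints and therefore behave differently — is omitted from at most $m-h$ of the $m$ sets. I expect to assemble the family from shifted copies of a single deletion pattern, treating the parities of $d$ separately and verifying the endpoint gaps directly; checking that this gives coverage exactly $h$ using only $2h-\lambda+1$ sets is the delicate step, and it is here that the exact value of $\alpha(d,\lambda)$, and hence the sharpness of $Z$, is forced.
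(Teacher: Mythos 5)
Your reduction of the geometric problem to parity-run combinatorics on the gaps is correct, and it is essentially the paper's own framework: the ``odd chain'' criterion is exactly what Propositions~\ref{propMinTr} and \ref{propRadTr} together with \eqref{thRadCyc} give, and your if-and-only-if characterization ($\aff(T)$ is a complete Kneser transversal precisely when every gap-family with at most $\lambda$ parity runs carries weight at most $k-1$) cleanly packages the two directions that the paper proves separately in Proposition~\ref{propIneq} and inside Proposition~\ref{propConstruction}. Your lower bound is complete: the counting lemma (a family with at most $\lambda$ runs inside $j$ consecutive gaps has at most $\beta(\lambda,j)$ members, by the span argument) is correct, and your weighting ($q+r$ on one gap of a prefix block, $q$ on the other $j-1$, with $q=\flo{\frac{k-1}{\beta(\lambda,j)}}$, $r=(k-1)\bmod\beta(\lambda,j)$) satisfies the constraints and totals $z_j$; this is a harmless variant of the paper's balanced distribution in Propositions~\ref{propOpti} and \ref{propConstruction}, and maximizing over admissible $j$ gives $z\le\zeta$.

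The upper bound, however, has a genuine gap. The whole argument hinges on exhibiting $2\cei{\frac{d}{2}}-\lambda+1$ gap-sets, each with at most $\lambda$ parity runs, covering every one of the $d-\lambda+2$ gaps at least $\cei{\frac{d}{2}}$ times; you identify this as the ``main obstacle,'' sketch a plan (shifted copies of one deletion pattern, separate parities of $d$, check the endpoints), and explicitly leave its verification open. That deferred step is precisely the paper's Proposition~\ref{propInter}: for $d$ odd one takes $I(d,\lambda)=\{1,\ldots,\lambda-1\}\cup\{\lambda,\lambda+2,\ldots,d-\lambda+1\}$ and its cyclic shifts $I(d,\lambda,j)=\sigma^{j-1}(I(d,\lambda))$, and for $d$ even one augments the odd-dimensional sets by the element $d-\lambda+2$ when they contain $1$. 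The covering multiplicity $\cei{\frac{d}{2}}$ is then immediate from $|I(d,\lambda)|=\frac{d+1}{2}$, but the run bound $OD(I(d,\lambda,j))\le\lambda-1$ for every shift is not automatic: a cyclic shift can create a parity change at the left endpoint and destroy or create one at the right endpoint, and the paper's proof is exactly the bookkeeping showing these gains and losses cancel. Until you carry out this construction (or produce any other fractional cover of the gaps by $\le\lambda$-run sets of total weight $2-\alpha(d,\lambda)$), the inequality $\zeta(k,d,\lambda)\le Z(k,d,\lambda)$ remains unproven; everything else in your proposal is sound, so this single construction is what separates your outline from a complete proof.
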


\subsection{Some combinatorial tools}\label{secTools}

In this section we develop some combinatorial tools that will allow us to prove Theorem~\ref{thCyclic}. Let $n$ be a positive integer. We denote by $[n]$ the set $\{1,2,\ldots,n\}$, as usual.

For a set $S$ of integers $x_1<x_2<\ldots<x_r$ we will denote by $OD(S)$ the number of odd integers in the set of differences
$$
\{x_2-x_1,x_3-x_2,\ldots,x_r-x_{r-1}\}.
$$
In other words, a set $S$ can be split in $OD(S)+1$ \emph{parity blocks}, that is, maximal blocks of adjacent elements with the same parity. For example, the set $S=\{1,4,5,7,8,10,12\}$ satisfies $OD(S)=3$ and its parity blocks are $\{1\}$, $\{4\}$, $\{5,7\}$ and $\{8,10,12\}$. It is easy to verify that if $S$ is a subset of $T$, then $OD(S)\leq OD(T)$.

Fix a non-negative integer $\ell$ and a positive integer $k$. In order to prove the lower bound for Theorem~\ref{thCyclic}, we are interested in finding the largest value of $n$ such that each $k$-set of $[n]$ contains a subset $S$ such that $OD(S)\geq \ell$. Let $D(k,\ell)$ denote this maximum.

\begin{proposition}\label{propAlter}
For a non-negative integer $\ell$ and a positive integer $k$ we have
$$
D(k,\ell) = \left\{\begin{array}{ll}
\displaystyle\infty & \text{if } \ell=0 \\
\displaystyle 2k-\ell-1 & \text{if } k\geq \ell\geq 1 \\
\displaystyle k-1 & \text{if } \ell\geq k.
\end{array}\right.
$$
\end{proposition}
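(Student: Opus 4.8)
The plan is to first reduce the problem to a statement purely about the quantity $OD(A)$ of the $k$-set $A$ itself, rather than about its subsets. Since the excerpt already records that $OD$ is monotone under inclusion, the maximum of $OD(S)$ over all subsets $S\subseteq A$ is attained at $S=A$. Consequently, ``$A$ contains a subset $S$ with $OD(S)\geq\ell$'' is equivalent to ``$OD(A)\geq\ell$'', so that $D(k,\ell)$ becomes the largest $n$ for which \emph{every} $k$-subset $A$ of $[n]$ satisfies $OD(A)\geq\ell$, i.e.\ splits into at least $\ell+1$ parity blocks.

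With this reduction, I would dispose of the two extreme cases immediately. When $\ell=0$ the condition $OD(A)\geq 0$ holds for every set, so $D(k,0)=\infty$. When $\ell\geq k$, any $k$-set has only $k-1$ consecutive differences, so $OD(A)\leq k-1<\ell$; thus no $k$-subset can meet the bound, and the requirement ``every $k$-subset satisfies $OD(A)\geq\ell$'' holds exactly when there are no $k$-subsets at all, that is when $n\leq k-1$. Hence $D(k,\ell)=k-1$ in this range.

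The heart of the argument is the range $1\leq\ell\leq k$, and here the key lemma I would prove is a \emph{span estimate}: if a $k$-set $A=\{x_1<\cdots<x_k\}$ decomposes into $b$ parity blocks, then $x_k-x_1\geq 2k-b-1$. This follows by summing the within-block increments (at least $2$ per step, contributing $2(k-b)$ in total) and the $b-1$ inter-block gaps (each at least $1$, since a parity change requires an odd difference). I expect this counting step, together with checking tightness, to be the main point requiring care. From the estimate both inequalities fall out. For the \emph{lower bound} $D(k,\ell)\geq 2k-\ell-1$: if $n\leq 2k-\ell-1$ and $A\subseteq[n]$ has $b$ parity blocks, then $n-1\geq x_k-x_1\geq 2k-b-1$ forces $b\geq 2k-n\geq\ell+1$, whence $OD(A)=b-1\geq\ell$. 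For the matching \emph{upper bound}, I would exhibit a single $k$-subset of $[2k-\ell]$ with only $\ell$ parity blocks, so that its $OD$ equals $\ell-1<\ell$; for instance $\{1,2,\ldots,\ell-1\}$ (which forms $\ell-1$ singleton blocks of alternating parity) together with the same-parity progression $\ell,\ell+2,\ldots,2k-\ell$ (one further block), whose largest element is exactly $2k-\ell$. This witness shows that for $n=2k-\ell$ not every $k$-subset meets the bound, giving $D(k,\ell)\leq 2k-\ell-1$.

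Finally I would note that the three formulas agree on their common boundary, since at $\ell=k$ both the middle expression $2k-\ell-1$ and the last expression $k-1$ coincide, which is a useful internal consistency check.
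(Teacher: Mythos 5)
Your proof is correct and takes essentially the same approach as the paper: the same three-case analysis, the same counting of odd versus even consecutive differences (your span estimate $x_k-x_1\geq 2k-b-1$ is exactly the paper's inequality $x_k-x_1\geq(\ell-1)+2(k-\ell)$, just stated as a lemma in terms of the block count $b$ and applied directly rather than by contradiction), and the identical extremal witness $\{1,\ldots,\ell-1\}\cup\{\ell,\ell+2,\ldots,2k-\ell\}$. The only cosmetic difference is that you make explicit the reduction, via monotonicity of $OD$, from ``contains a subset with $OD\geq\ell$'' to ``$OD$ of the whole $k$-set is at least $\ell$,'' which the paper uses implicitly.
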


\begin{proof}
We will first establish the formula for the cases $\ell=0$ and $\ell\geq k$. For any element $j$ of $[n]$ we have $OD(\{j\})=0$ therefore, if $\ell=0$ then $n$ can be as large as we want.
	
Now suppose that $\ell\geq k$. If $n\leq k-1$, then there are no $k$-sets. If $n\geq k$, then there is at least one $k$-set $K$, but any subset $S$ of $K$ set satisfies $OD(S)\leq OD(K)\leq k-1$.
	
We are left with the case $k\geq\ell\geq 1$. Suppose that $n\leq 2k-\ell-1$ and consider a $k$-set of $[n]$ with ordered elements $x_1<x_2<\ldots<x_k$. If it does not have any subset $S$ with $OD(S)\geq \ell$, then at most $\ell-1$ of the numbers in $\{x_2-x_1,x_3-x_2,\ldots,x_k-x_{k-1}\}$ are odd, and thus at least $(k-1)-(\ell-1)=k-\ell$ of them are even giving
$$
x_k-x_1=\sum_{j=1}^{k-1}\left(x_{j+1}-x_j\right) \geq \ell-1 + 2 (k-\ell) = 2k-\ell-1.
$$
This implies $n\geq x_k\geq 2k-\ell-1 +x_1\geq 2k-\ell$, a contradiction.
	
Now, if $n\geq 2k-\ell$, then the set
$$
K=\{1,2,3,\ldots,\ell-1\}\cup\{\ell,\ell+2,\ell+4,\ldots,\ell+2(k-\ell)\}
$$
is a $k$-set of $[n]$. Notice that $OD(K)=\ell-1$ and therefore each subset $S$ satisfies $OD(S)\leq \ell-1$.
\end{proof}

We now develop further combinatorial tools needed for the proof of the upper bound for Theorem~\ref{thCyclic}. Let $d$ and $\lambda$ be two positive integers. We will construct a special family of subsets of $[d-\lambda+2]$ as follows.

First, we consider the case in which $d$ is odd. We define
$$
I(d,\lambda): =\{1,2,\ldots,\lambda-1\}\cup\{\lambda,\lambda+2,\lambda+4,\ldots,d-\lambda+1\}.
$$
The parity of $d$ ensures that the last term is correct. Notice that this set has
$$
(\lambda-1)+\frac{(d-\lambda+2)-(\lambda-1)}{2}=\frac{d+1}{2}
$$
elements.

Now, let $\sigma$ be the cyclic permutation in $[d-\lambda+2]$ defined as $\sigma(i)=i+1$ for $i\in [d-\lambda+1]$ and $\sigma(d-\lambda+2)=1$. For each $j\in[d-\lambda+2]$ we define the set
$$
I(d,\lambda,j) := \sigma^{j-1}(I(d,\lambda)).
$$
Finally, we define the sets $I(d,\lambda,j)$ for even values of $d$. In this case we will only define the sets up to $j=d-\lambda+1$ as follows:
$$
I(d,\lambda,j) := \left\{\begin{array}{ll}
I(d-1,\lambda,j)\cup\{d-\lambda+2\} &  \text{if $1$ is in $I(d-1,\lambda,j)$,} \\
I(d-1,\lambda,j) &  \text{if not}.
\end{array}\right.
$$
Notice that for fixed $\lambda$ and $d$, we have defined a total of $2\cei{\frac{d}{2}}-\lambda+1$ sets regardless of the parity of $d$. We now prove that the elements from $[d-\lambda+2]$ are well distributed in these sets and that the sets have few parity blocks.

\begin{proposition}\label{propInter}
Fix two positive integers $d$ and $\lambda$. Define the sets $I(d,\lambda,j)$ as above. Then
\begin{enumerate}
\item Each number from $[d-\lambda+2]$ appears in exactly $\cei{\frac{d}{2}}$ of the sets $I(d,\lambda,j)$.
\item For each $j$ we have $OD(I(d,\lambda,j))\leq \lambda - 1$.
\end{enumerate}
\end{proposition}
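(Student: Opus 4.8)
The plan is to prove the two statements separately for odd and even $d$, exploiting the fact that for odd $d$ the sets $I(d,\lambda,j)$ are exactly the $N:=d-\lambda+2$ cyclic rotations of the single set $I(d,\lambda)$, and then bootstrapping the even case from the odd one. The main device is a \emph{cyclic gap} bookkeeping for $OD$. Identifying $[N]$ with $\Z/N\Z$, for a nonempty $Y\subseteq[N]$ I record the forward distances between cyclically consecutive elements of $Y$; these are $|Y|$ positive integers summing to $N$. The linear gaps that define $OD(Y)$ are precisely these cyclic gaps with the single wrap-around gap $N+\min Y-\max Y$ deleted, so $OD(Y)$ equals the number of odd cyclic gaps of $Y$ minus $1$ if the wrap gap is odd. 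Crucially, $\sigma$ preserves the whole cyclic sequence of gaps, so all rotations of a set share the same multiset of cyclic gaps.

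For odd $d$ I would compute the cyclic gaps of $I(d,\lambda)$ directly: the initial run $\{1,2,\dots,\lambda\}$ contributes the $\lambda-1$ gaps of size $1$, while the remaining steps (through the spaced block $\lambda,\lambda+2,\dots$ and across the wrap) all have size $2$. Hence $I(d,\lambda)$, and therefore every rotation $I(d,\lambda,j)$, has exactly $\lambda-1$ odd cyclic gaps, giving statement (2) since deleting the wrap gap can only lower the count. For statement (1), the $N$ sets $I(d,\lambda,j)$ with $j\in[N]$ form a single $\langle\sigma\rangle$-orbit, so $x\in I(d,\lambda,j)$ if and only if $\sigma^{-(j-1)}(x)\in I(d,\lambda)$; as $j$ runs over $[N]$ the preimage runs over all of $[N]$, and thus $x$ lies in exactly $|I(d,\lambda)|=\frac{d+1}{2}=\cei{\frac{d}{2}}$ of the sets.

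For even $d$ I would reduce to $d-1$, writing $N'=N-1$. Statement (1) follows because $x\in I(d,\lambda,j)\Leftrightarrow x\in I(d-1,\lambda,j)$ for $x\le N-1$, while $N\in I(d,\lambda,j)\Leftrightarrow 1\in I(d-1,\lambda,j)$; since here $j$ ranges over $[N-1]=[N']$, which is a full $\langle\sigma'\rangle$-orbit of the $(d-1)$-construction, the odd-case count shows each case occurs exactly $|I(d-1,\lambda)|=\frac{d}{2}=\cei{\frac{d}{2}}$ times. For statement (2), if $1\notin I(d-1,\lambda,j)$ then the set is unchanged and $OD$ is inherited; if $1\in I(d-1,\lambda,j)$ we adjoin the new maximum $N$, creating one extra linear gap $N-M$ with $M=\max I(d-1,\lambda,j)$.

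The step I expect to be the real obstacle is exactly this last one: a careless bound allows the adjoined gap to be odd and would push $OD$ up to $\lambda$. The resolution is the identity $N-M=N'+1-M$ (since $N=N'+1$), which says that the adjoined gap is \emph{equal} to the wrap gap that was deleted when $I(d-1,\lambda,j)$ was linearized on $\Z/N'\Z$. Consequently the linear gaps of $I(d,\lambda,j)$ are exactly \emph{all} the cyclic gaps of $I(d-1,\lambda,j)$, so $OD(I(d,\lambda,j))$ equals the full count of odd cyclic gaps of a rotation of $I(d-1,\lambda)$, namely $\lambda-1$. This is precisely why the construction adjoins $N$ only to the sets containing $1$, and checking this matching is the crux of the whole argument.
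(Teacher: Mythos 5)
Your proof is correct, and for part (2) it takes a genuinely different route from the paper. Part (1) is essentially the paper's argument: orbit counting for odd $d$ (since $\sigma$ has order $d-\lambda+2$ and $|I(d,\lambda)|=\cei{\frac{d}{2}}$), and for even $d$ a reduction to the odd case $d-1$ together with the observation that $d-\lambda+2$ lies in exactly as many sets as $1$ does. For part (2), however, the paper proves $OD(I(d,\lambda,j))=\lambda-1$ \emph{directly for even} $d$ by induction on $j$: it checks the base case $j=1$ explicitly and then tracks how parity changes are created and destroyed at the two endpoints when passing from $I(d,\lambda,j)$ to $I(d,\lambda,j+1)$ (a somewhat delicate case analysis), after which the odd case follows from $I(d-1,\lambda,j)\subseteq I(d,\lambda,j)$ and monotonicity of $OD$ under inclusion. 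You go the other way: you prove the odd case first via the rotation-invariant multiset of cyclic gaps, computing it once for $I(d,\lambda)$ ($\lambda-1$ gaps of size $1$ and all remaining gaps, including the wrap gap, of size $2$), and then bootstrap the even case through the identity $N-M=N'+1-M$, which shows that the linear gaps of $I(d,\lambda,j)$ coincide exactly with the cyclic gaps of $I(d-1,\lambda,j)$. Your invariant-based argument buys a cleaner, essentially computation-free treatment of the rotations (no induction on $j$, no endpoint bookkeeping), and it explains conceptually why the even-$d$ construction adjoins $d-\lambda+2$ precisely to the sets containing $1$: that is what restores the deleted wrap gap. The paper's argument is more hands-on and elementary, but its endpoint case analysis is considerably harder to verify than your gap-matching step.
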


\begin{proof}
\begin{enumerate}
\item[]
\item
If $d$ is odd, this follows from the fact that $\sigma$ is a permutation of order $d-\lambda+2$ and that $I$ has $\frac{d+1}{2}=\cei{\frac{d}{2}}$ elements. If $d$ is even, by the previous argument all the elements of $[d-\lambda+1]$ appear in $\frac{(d-1)+1}{2}=\cei{\frac{d}{2}}$ of the subsets. The claim also follows for the element $d+\lambda+2$ because it appears in as many sets as $1$.
\item
We will prove that
$$
OD(I(d,\lambda,j))= \lambda-1
$$
holds for even values of $d$. This will be enough because then $I(d-1,\lambda, j)\subseteq I(d,\lambda,j)$, and therefore $OD(I(d-1,\lambda,j)) \leq OD(I(d,\lambda,j))= \lambda-1$.

If $j=1$, then the set is
$$
I(d,\lambda,1): =\{1,2,\ldots,\lambda-1\}\cup\{\lambda,\lambda+2,\lambda+4,\ldots,d-\lambda, d-\lambda+2\}.
$$
which has exactly $\lambda-1$ parity changes. We will now proceed by induction and compare $I(d,\lambda,j)$ and $I(d,\lambda,j+1)$. All the parity changes in $[d-\lambda]$ are shifted to parity changes in the interval $[2,\ldots,d-\lambda+1]$, therefore we can only increase or decrease parity changes at the endpoints.
		
At the left endpoint a shift can only increase parity changes. This happens if and only if after the shift we obtain $1$ and $2$, if and only if before the shift we had $1$ and $d-\lambda+1$. Therefore, before and after the change we have the element $d-\lambda+2$. If the number to the left of $d-\lambda+1$ was $d-\lambda$, then we lose the parity change from $d-\lambda$ to $d-\lambda+1$ If the number to the left of $d-\lambda+1$ was $d-\lambda-1$, then we lose the parity change from $d-\lambda+1$ to $d-\lambda+2$. Either way, the total changes of parity remain constant.
		
If we obtain a parity change using the right endpoint it was because we had both $d-\lambda$ and $d-\lambda+1$ but not $d-\lambda+2$. But in this case the win is compensated by the loss of the parity change from $d-\lambda$ to $d-\lambda+1$.
		
Finally, if we lose a parity change at the right endpoint it is because we had the elements $d-\lambda+1$ and $d-\lambda+2$, but this gets compensated by an additional parity change at the left endpoint.
\end{enumerate}
\end{proof}

\subsection{Upper bound of Theorem~\ref{thCyclic}}\label{secUp}

In this section we prove that $\zeta(k,d,\lambda) \leq Z(k,d,\lambda)$. Let $\gamma : \R\to\R^d$ be the moment curve and let $t_1<\ldots<t_n$ be some real positive numbers. Consider the cyclic polytope with vertex set $V=\{v_1,v_2,\ldots,v_n\}$ where $v_i=\gamma(t_i)$. We want to show that if there exists a complete Kneser $(d-\lambda)$-transversal to all the convex hulls of $V$, then $n\leq Z(k,d,\lambda)$.
	
Consider some indices $1\leq i_1<i_2<\ldots<i_{d-\lambda+1}\leq n$. For each $j\in[d-\lambda+1]$ define $w_j$ as $v_{i_j}$. Furthermore, define
$$
\begin{array}{rcl}
\displaystyle A_1 & := & \left\{v_1,\ldots,v_{i_1-1}\right\}, \\[1.25ex]
\displaystyle A_j & := & \left\{v_i | i\in \{i_{j-1}+1,\ldots,i_j-1\}\right\}\text{ for }j\in\{2,3,\ldots,d-\lambda+1\}, \\[1.25ex]
\displaystyle A_{d-\lambda+2} & := & \left\{v_{i_{d-\lambda+1}+1},\ldots,v_n\right\}, \\[1.25ex]
\mathcal{A} & := & \left\{A_1,A_2,\ldots,A_{d-\lambda+2}\right\}, \\[1.25ex]
\displaystyle T & := & \left\{w_1,\ldots,w_{d-\lambda+1}\right\}. \\
\end{array}
$$
We can get a better intuition of what is going on by concatenating the $A's$ and the $w's$ as follows:
$$
V = A_1 \, w_1 \, A_2 \, w_2 \, A_3 \,\ldots \, A_{d-\lambda+1} \, w_{d-\lambda+1} \, A_{d-\lambda+2}.
$$
We want a tool to test whether $\aff(T)$ could be a transversal to all the convex hulls of the $k$-sets. The tool is precisely the parity changes introduced in Subsection~\ref{secTools}. The following proposition makes the key connection.

\begin{proposition}\label{propIneq}
Using the notation above, if $\aff(T)$ is a transversal to all the convex hulls of the $k$-sets then for every subset $S$ of $[d-\lambda+2]$ such that $OD(S)\leq\lambda-1$ we have
$$
\sum_{\alpha \in S} |A_\alpha| \leq k-1.
$$
\end{proposition}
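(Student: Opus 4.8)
The plan is to prove the contrapositive. Suppose there is a subset $S \subseteq [d-\lambda+2]$ with $OD(S) \le \lambda-1$ but $\sum_{\alpha \in S} |A_\alpha| \ge k$. I will exhibit a single $k$-set $K$ whose convex hull misses $\aff(T)$, thereby showing that $\aff(T)$ is not a transversal. Since the gaps indexed by $S$ jointly contain at least $k$ vertices, I simply take $K$ to be any $k$-subset of $\bigcup_{\alpha\in S} A_\alpha$. Note that $K$ is disjoint from $T$ (the gaps contain none of the $w_j$), that $|K| = k \ge \lambda+1$, and that the whole configuration lies on the moment curve and hence is in general position.

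To test whether $\conv(K)\cap\aff(T)=\emptyset$, I would first invoke Proposition~\ref{propMinTr}: since $|T|=d-\lambda+1$ and $|K|\ge\lambda+1$, it suffices to show that $\conv(K')\cap\aff(T)=\emptyset$ for \emph{every} $(\lambda+1)$-subset $K'$ of $K$. Fix such a $K'=\{x_1<x_2<\cdots<x_{\lambda+1}\}$ and let $\alpha_m\in S$ be the index of the gap containing $x_m$, so that $\alpha_1\le\alpha_2\le\cdots\le\alpha_{\lambda+1}$. Now $K'\cup T$ is a set of $d+2$ points on the moment curve, so by (\ref{thRadCyc}) its unique Radon partition is read off by sorting $K'\cup T$ increasingly and alternating the signs $+-+-\cdots$. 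By Proposition~\ref{propRadTr}, $\conv(K')\cap\aff(T)=\emptyset$ precisely when the elements of $K'$ do \emph{not} all receive the same sign.

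The heart of the argument is a short position count. An element lying in $A_\alpha$ is preceded in $V$ by exactly $\alpha-1$ of the $w_j$; hence in the sorted order of $K'\cup T$ the element $x_m$ is preceded by $(\alpha_m-1)$ of the $w$'s and by the $m-1$ smaller elements of $K'$, putting it in position $\alpha_m+m-1$. Thus $x_m$ receives the sign $+$ exactly when $\alpha_m+m-1$ is odd, i.e. when $\alpha_m\equiv m\pmod 2$. Suppose now, for contradiction, that all the $x_m$ share a sign. Then either $\alpha_m\equiv m$ for all $m$ (all $+$), or $\alpha_m\equiv m+1$ for all $m$ (all $-$); in both cases $\alpha_{m+1}-\alpha_m$ is odd for every $m$. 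Since the $\alpha_m$ are non-decreasing, each such difference is strictly positive, so the $\alpha_m$ are distinct and all $\lambda$ consecutive differences of the set $\{\alpha_1<\cdots<\alpha_{\lambda+1}\}$ are odd; that is, $OD(\{\alpha_1,\ldots,\alpha_{\lambda+1}\})=\lambda$. But $\{\alpha_1,\ldots,\alpha_{\lambda+1}\}\subseteq S$, so monotonicity of $OD$ under inclusion gives $\lambda \le OD(S)\le\lambda-1$, a contradiction. Hence no $(\lambda+1)$-subset of $K$ is monochromatic, and $\conv(K)\cap\aff(T)=\emptyset$, as required.

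I expect the only delicate point to be the sign bookkeeping in the position count: in particular, checking that it remains valid when several $x_m$ fall in the same gap (there the non-decreasing, rather than strictly increasing, nature of the $\alpha_m$ is what matters) and confirming that ``all the same sign'' translates into ``$\alpha_{m+1}-\alpha_m$ is odd for every $m$.'' Everything else is a direct assembly of Propositions~\ref{propMinTr} and~\ref{propRadTr} with the alternating-circuit fact (\ref{thRadCyc}) and the inclusion-monotonicity of $OD$.
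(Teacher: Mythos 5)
Your proof is correct and follows essentially the same route as the paper's: pass to the contrapositive, choose a $k$-set $K\subseteq\bigcup_{\alpha\in S}A_\alpha$, reduce to $(\lambda+1)$-subsets via Proposition~\ref{propMinTr}, and rule out monochromatic Radon signs using \eqref{thRadCyc} together with Proposition~\ref{propRadTr}. The only difference is cosmetic: the paper pigeonholes the $\lambda+1$ gap-indices into the at most $\lambda$ parity blocks of $S$ to produce two consecutive indices with an even gap (hence opposite signs), whereas you argue the contrapositive of that inner step, using an explicit position count to show that monochromaticity forces all consecutive gap differences to be odd, yielding a subset of $S$ with $OD$ equal to $\lambda$ and contradicting $OD(S)\leq\lambda-1$ by inclusion-monotonicity of $OD$.
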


\begin{proof}
Suppose that $S$ is a subset of $[d-\lambda+2]$ such that $OD(S)\leq\lambda - 1$ and
$$
\sum_{\alpha \in S} |A_\alpha| \geq k.
$$
Then we can choose a $k$-set $K\subseteq \bigcup_{\alpha \in S} A_\alpha$. According to Proposition~\ref{propMinTr}, checking whether $\aff(T)\cap\conv(K)\neq\emptyset$ is the same as checking that $\aff(T)\cap\conv(K')\neq\emptyset$ for every set $K'\subseteq K$ such that $|K'| = d-(d-\lambda)+1=\lambda+1$.

Suppose $K'=\{v_{j_1},\ldots,v_{j_{\lambda+1}}\}$ for some indices $j_1<\ldots<j_{\lambda+1}$. Each element of $K'$ is in some element of $\mathcal{A}$. For each $r\in [\lambda+1]$, let $\hat{j_r}$ be the index such that $v_{j_r}\in A_{\hat{j_r}}$. Notice that
\begin{itemize}
\item
$\hat{j_1}\leq \hat{j_2}\leq \ldots \leq \hat{j_{\lambda+1}}$,
\item
$\hat{j_r}\in S$ for every $r$.
\end{itemize}
Since $OD(S)\leq \lambda-1$ then two of the indices $\hat{j_r}$ must lie in the same parity block of $S$. We may assume that they correspond to two vertices $v_{j_r}$ and $v_{j_{r+1}}$. Since $\hat{j_r}$ and $\hat{j_{r+1}}$ lie in the same parity interval there are an even number of vertices of $T\cup K'$ between $v_{j_r}$ and $v_{j_{r+1}}$. Therefore, according to \eqref{thRadCyc} they get different signs in the Radon partition.
	
Using Proposition~\ref{propRadTr} we have that $\aff(T)$ does not intersect $\conv(K')$. Thus $\aff(T)$ does not intersect $\conv(K)$ and therefore it cannot be a transversal.
\end{proof}

Combining this proposition with the combinatorial lemmas of Subection~\ref{secTools} we may now prove the upper bound of $\zeta(k,d,\lambda)$, that is
$$
\zeta(k,d,\lambda)\leq (d-\lambda+1) + \flo{(2-\alpha(d,\lambda))(k-1)}.
$$

\begin{proof}[Proof of the upper bound Theorem~\ref{thCyclic}]
Suppose $T$ is a transversal. By the second part of Proposition~\ref{propInter}, the sets $I(d,\lambda,k)$ of Section~\ref{secTools} satisfy the hypothesis for $S$ in Proposition~\ref{propIneq}. Adding the corresponding inequalities for each of the $2\cei{\frac{d}{2}}-\lambda+1$ sets we get:
$$
\sum_{j}\sum_{\alpha\in I(d,\lambda,j)} |A_\alpha| \leq \left(2\cei{\frac{d}{2}}-\lambda+1\right)(k-1).
$$
Using the first part of Proposition~\ref{propInter}, we have that each $|A_\alpha|$ appears exactly $\cei{\frac{d}{2}}$ times in the left hand side sum. Therefore,
$$
\cei{\frac{d}{2}}\cdot\sum_{\alpha=1}^{d-\lambda+2} |A_\alpha| \leq \left(2\cei{\frac{d}{2}}-\lambda+1\right)(k-1).
$$
Finally, dividing by $\cei{\frac{d}{2}}$ and adding the points from the transversal we get
$$
|V| = (d-\lambda+1) + \sum_{\alpha=1}^{d-\lambda+2} |A_\alpha| \leq (d-\lambda+1) + \left(\frac{2\cei{\frac{d}{2}}-\lambda+1}{\cei{\frac{d}{2}}}\right)(k-1).
$$
Since $|V|$ is an integer, we can take the floor function on both sides to get the desired result.
\end{proof}

Therefore, we obtain as a corollary the following upper bound for $m^*(k,d,\lambda)$.

\begin{corollary}
In the non-trivial range, when $\alpha(d,\lambda)< 1$ we have that
$$
m^*(k,d,\lambda)\le (d-\lambda+1) + \flo{(2-\alpha(d,\lambda))(k-1)} = Z(k,d,\lambda).
$$
\end{corollary}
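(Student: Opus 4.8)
The plan is to get this bound essentially for free, by chaining two inequalities that are already available. The first is the general bound $m^*(k,d,\lambda)\le\zeta(k,d,\lambda)$ noted earlier in this section; the second is the upper bound $\zeta(k,d,\lambda)\le Z(k,d,\lambda)$ that we have just finished proving as the upper half of Theorem~\ref{thCyclic}. Putting them together immediately gives $m^*(k,d,\lambda)\le Z(k,d,\lambda)$ in the non-trivial range.

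First I would spell out why $m^*\le\zeta$, since the whole corollary hinges on it. Set $n_0=m^*(k,d,\lambda)$. By definition of $m^*$, every configuration of $n_0$ points in $\R^d$ admits a complete Kneser $(d-\lambda)$-transversal; applying this to one particular configuration, namely the cyclic polytope on $n_0$ vertices, shows that this cyclic polytope also admits such a transversal. Since $\zeta(k,d,\lambda)$ is by definition the largest number of vertices of a cyclic polytope with this property, we conclude $\zeta(k,d,\lambda)\ge n_0=m^*(k,d,\lambda)$. The only point requiring care is the direction of this specialization: we pass from ``every $n_0$-point set works'' to ``the cyclic polytope works'', hence to $\zeta\ge n_0$, and not the other way around.

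Next I would invoke $\zeta(k,d,\lambda)\le Z(k,d,\lambda)$, which is exactly the upper bound half of Theorem~\ref{thCyclic} and is valid precisely under the hypothesis $\alpha(d,\lambda)<1$. This is where the restriction to the non-trivial range enters the corollary --- it is inherited from Theorem~\ref{thCyclic}, not from the unconditional inequality $m^*\le\zeta$ --- and it is also where all the genuine content sits: the families $I(d,\lambda,j)$ of Subsection~\ref{secTools}, the parity-block count of Proposition~\ref{propInter}, and the transversal test of Proposition~\ref{propIneq}.

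Combining the two inequalities and recalling that $Z(k,d,\lambda)=(d-\lambda+1)+\flo{(2-\alpha(d,\lambda))(k-1)}$ yields the stated bound. The hard part has, in effect, already been done in the proof of Theorem~\ref{thCyclic}; the corollary simply transports that conclusion from the single family of cyclic polytopes to arbitrary point configurations, so I do not expect any substantive obstacle here beyond keeping the logic of the specialization straight.
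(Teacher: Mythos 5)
Your proof is correct and matches the paper's own (implicit) argument exactly: the corollary is obtained by chaining $m^*(k,d,\lambda)\le\zeta(k,d,\lambda)$ with the upper bound $\zeta(k,d,\lambda)\le Z(k,d,\lambda)$ just established in Theorem~\ref{thCyclic}. Your additional care in justifying $m^*\le\zeta$ (specializing ``every $n_0$-point set works'' to the cyclic polytope) and in noting that the hypothesis $\alpha(d,\lambda)<1$ is inherited from Theorem~\ref{thCyclic} is exactly the right reading of the paper's one-line derivation.
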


\subsection{Lower bound of Theorem~\ref{thCyclic}}\label{secLow}

Now we want to show that the cyclic polytope with $z(k,d,\lambda)$ points always has a complete Kneser $(d-\lambda)$-transversal to the convex hulls of all its $k$-sets. We will use the notation from Section~\ref{secUp}. Notice that to determine which points are the $d-\lambda+1$ that will generate the transversal, it is enough to say how many points are in each set $A_i$.

The idea for constructing the example and proposing a complete Kneser transversal will be to fix the parameters $k, d, \lambda$ and then distribute the points as evenly as possible in the sets $A_i$. However, it might be the case that distributing the points using \emph{all} the sets $A_i$ is not optimal. Sometimes we get a better example by evenly distributing the points only in some of the first $A_i$'s. This is the role that $j$ plays in the following formula:
$$
z(k,d,\lambda) = (d-\lambda+1) + \max_{\substack{j\in \{\lambda+1,\ldots,d-\lambda+2\}\\ j+\lambda \text{ is odd}}} \left(\flo{\frac{k-1}{\beta(\lambda,j)}}\cdot j + (k-1)_{\bmod{\beta(\lambda,j)}}\right).
$$
Remember that we had defined $\beta(\lambda,j)=\frac{j+\lambda-1}{2}$ when $j+\lambda$ is odd. We can extend this definition for both parities of $j+\lambda$ by defining $\beta(\lambda,j)$ as $\cei{\frac{j+\lambda-1}{2}}$.

We give a brief explanation of the formula to make the proof clearer. The $d-\lambda+1$ in the formula represents the points that generate the complete Kneser transversal. For a fixed value of $j$, we will distribute
$$
z_j := \flo{\frac{k-1}{\beta(\lambda,j)}}\cdot j + {(k-1)}_{\bmod{\beta(\lambda,j)}}.
$$
points evenly in the sets $A_1$, $\ldots$, $A_j$ and we will show that we indeed get a complete Kneser transversal. It may seem strange that in the index of the $\max$ in the formula for $z(k,d,\lambda)$ we have the restrictions $j\geq \lambda+1$ and $j+\lambda$ is odd. The first one is due to Proposition~\ref{propIneq} and the second one is just a refinement to improve optimality (as we will see in the proof). We can also create examples when $j+\lambda$ is even, but it turns out that they are not optimal. We work having in mind that we want to stretch the idea of even distribution as much as possible.

So, let us fix a value of $j$. We need the following auxiliary result.

\begin{proposition}\label{propOpti}
The solution to the optimization problem in variables $a$, $r$, maximize the function $aj+r$ subject to the restrictions
\begin{enumerate}
\item\label{r1}
$a$ is a non-negative integer,
\item\label{r2}
$r$ is in $\{0,1,\ldots,j-1\}$,
\item\label{r3}
If $r\geq \beta(\lambda,j)$, then $\beta(\lambda,j) (a+1) \leq k-1$,		
\item\label{r4}
If $r\leq \beta(\lambda,j)-1$, then $(a+1)r + (\beta(\lambda,j)-r) a \leq k-1$.
\end{enumerate}
is $z_j$ and is obtained when
$$
a=\flo{\frac{k-1}{\beta(\lambda,j)}} \quad\text{ and }\quad r={(k-1)}_{\bmod{\beta(\lambda,j)}}.
$$
\end{proposition}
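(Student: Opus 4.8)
The plan is to prove feasibility and optimality of the proposed pair separately. Throughout I would write $\beta=\beta(\lambda,j)$, $a^{*}=\flo{\frac{k-1}{\beta}}$ and $r^{*}=(k-1)_{\bmod\beta}$, so that $k-1=\beta a^{*}+r^{*}$ with $0\le r^{*}\le\beta-1$ by definition of the remainder. Since the proposition is only used in the range $j\ge\lambda+1$, I would first record the single structural fact on which everything rests: $\beta=\cei{\frac{j+\lambda-1}{2}}\le j$, which holds because $\cei{\frac{j+\lambda-1}{2}}\le j\iff j+\lambda-1\le 2j\iff\lambda-1\le j$. Feasibility of $(a^{*},r^{*})$ is then immediate: restrictions~\eqref{r1} and~\eqref{r2} hold because $r^{*}\le\beta-1\le j-1$, and since $r^{*}\le\beta-1$ it is restriction~\eqref{r4} that governs, where a direct expansion gives $(a^{*}+1)r^{*}+(\beta-r^{*})a^{*}=\beta a^{*}+r^{*}=k-1$, so~\eqref{r4} holds with equality. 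The objective value at $(a^{*},r^{*})$ is exactly $a^{*}j+r^{*}=z_j$.

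For the upper bound I would take an arbitrary feasible pair $(a,r)$ and split into the two mutually exclusive regimes that determine which of~\eqref{r3},~\eqref{r4} is active. If $r\ge\beta$, then~\eqref{r3} forces $\beta(a+1)\le k-1$, hence $a\le\frac{k-1}{\beta}-1$ and, $a$ being an integer, $a\le a^{*}-1$; combined with $r\le j-1$ from~\eqref{r2} this gives
\[
aj+r\le (a^{*}-1)j+(j-1)=a^{*}j-1<a^{*}j\le z_j,
\]
the last inequality because $r^{*}\ge 0$. If instead $r\le\beta-1$, then~\eqref{r4} reads $\beta a+r\le k-1$, and from $r\ge 0$ we also obtain $a\le\flo{\frac{k-1}{\beta}}=a^{*}$. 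The key algebraic step is to decouple the objective from the constraint by writing $aj+r=(j-\beta)a+(\beta a+r)$; using $j-\beta\ge 0$ together with $a\le a^{*}$ and $\beta a+r\le k-1$ yields
\[
aj+r=(j-\beta)a+(\beta a+r)\le (j-\beta)a^{*}+(k-1)=a^{*}j+(k-1-\beta a^{*})=a^{*}j+r^{*}=z_j.
\]
In either regime the value is at most $z_j$, so $z_j$ is the maximum and it is attained at $(a^{*},r^{*})$.

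Conceptually this is a one–variable integer optimization: eliminating $r$ through the active constraint reduces the problem to maximizing $(j-\beta)a$ over $0\le a\le a^{*}$. The only genuinely necessary ingredient is the inequality $\beta\le j$ (equivalently $j\ge\lambda-1$, guaranteed by $j\ge\lambda+1$): it makes the coefficient $j-\beta$ nonnegative, so that pushing $a$ up to $a^{*}$ is optimal, and it also guarantees $r^{*}\le\beta-1\le j-1$, so the proposed point respects the cap in~\eqref{r2}. I do not expect any serious obstacle beyond keeping the two regimes straight and spotting the decoupling identity $aj+r=(j-\beta)a+(\beta a+r)$.
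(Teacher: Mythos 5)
Your proof is correct. It shares the paper's basic structure---the same split into the regimes $r\ge\beta$ and $r\le\beta-1$, with essentially identical handling of the first regime (restriction~\eqref{r3} gives $a\le a^*-1$, which together with $r\le j-1$ caps the value at $a^*j-1<z_j$)---but it closes the second regime by a different mechanism. The paper argues greedily there: it asserts that one should first maximize $a$ and then maximize $r$, computes the regime optimum to be exactly $a^*j+r^*$, and finally compares the two regime optima. You instead verify that the candidate $(a^*,r^*)$ is feasible (with equality in~\eqref{r4}) and then bound every feasible pair at once through the decoupling identity $aj+r=(j-\beta)a+(\beta a+r)\le(j-\beta)a^*+(k-1)=z_j$. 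The gain is rigor: the paper's ``first maximize $a$'' step is legitimate only because one unit of $a$, worth $j$, outweighs the largest gain available in $r$ within that regime, namely $\beta-1$; this silently uses $\beta\le j$, a hypothesis the proposition never states. Your identity needs exactly the same fact ($j-\beta\ge 0$), but you isolate it, prove it ($\beta=\cei{\frac{j+\lambda-1}{2}}\le j$ if and only if $\lambda-1\le j$), and note that it follows from the standing assumption $j\ge\lambda+1$ under which the proposition is applied. What the paper's route buys in exchange is the exact maximum within each regime (for instance, that the first regime tops out at precisely $a^*j-1$), information that is never actually used.
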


\begin{proof}
It is clear that to maximize the expression $aj+r$, first we have to maximize $a$ given our constraints, and then maximize $r$.

We will first get the maximum when $r\geq \beta(\lambda,j)$. In this case, we are subject to the restriction~\eqref{r3}:
$$
\beta(\lambda,j) (a+1) \leq k-1.
$$
The best value we can get for $a$ is $\flo{\frac{k-1}{\beta(\lambda,j)}}-1$. Since in this case we only have restriction~\eqref{r2} for $r$, then we can get $r=j-1$. Therefore, in this case the maximum for $aj+r$ is
$$
\left(\flo{\frac{k-1}{\beta(\lambda,j)}}-1\right)\cdot j + (j-1) = \flo{\frac{k-1}{\beta(\lambda,j)}} \cdot j -1.
$$
In the case $r\leq \beta(\lambda,j)-1$, we have restriction~\eqref{r4}, which can be simplified to
$$
\beta(\lambda,j) a + r \leq k-1.
$$
Once again, we first optimize $a$. The greatest value it can take is $\flo{\frac{k-1}{\beta(\lambda,j)}}$. Afterwards, by restriction~\eqref{r4} the maximum value that $r$ can take is ${(k-1)}_{\bmod{\beta(\lambda,j)}}$. Now we just check that this value of $r$ lies in the case we are studying:
$$
r = {(k-1)}_{\bmod{\beta(\lambda,j)}}\leq \beta(\lambda,j)-1.
$$
Therefore, when $r\leq \beta(\lambda,j)-1$ the best value for $aj+r$ is
$$
\flo{\frac{k-1}{\beta(\lambda,j)}}\cdot j + {(k-1)}_{\bmod{\beta(\lambda,j)}}.
$$
This value is always greater than the optimal value of the case $r\geq \beta(\lambda,j)$. Therefore, the solution for the whole optimization problem is $z_j$, it is attained in the case $r\leq \beta(\lambda,j)-1$ and the values of $a$ and $r$ are
$$
a = \flo{\frac{k-1}{\beta(\lambda,j)}} \quad\text{ and }\quad r = {(k-1)}_{\bmod{\beta(\lambda,j)}}.
$$
\end{proof}

The following proposition provides the connection between Proposition~\ref{propOpti} and the construction of examples with complete Kneser transversals.

\begin{proposition}\label{propConstruction}
Let $j$ be an integer in $\{\lambda+1,\ldots,d-\lambda+2\}$ and $a$ and $r$ integers that satisfy the constraints from Proposition~\ref{propOpti}. The cyclic polytope in dimension $d$ with
$$
(d-\lambda+1)+aj+r
$$
vertices has a complete Kneser $(d-\lambda)$-transversal to the convex hulls of all its $k$-sets.
\end{proposition}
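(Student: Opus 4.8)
The plan is to realise the prescribed number of vertices as a cyclic polytope whose gaps $A_1,\dots,A_{d-\lambda+2}$ (in the notation of Subsection~\ref{secUp}) carry the vertices distributed as evenly as possible, and then to show that the affine hull of the $d-\lambda+1$ remaining vertices is the desired transversal. Concretely, I would place all $aj+r$ non-transversal vertices inside the first $j$ gaps, giving $r$ of these gaps $a+1$ vertices and the other $j-r$ gaps $a$ vertices, and leaving $A_{j+1},\dots,A_{d-\lambda+2}$ empty; this uses exactly $r(a+1)+(j-r)a=aj+r$ vertices, so $|V|=(d-\lambda+1)+aj+r$ as required. Since the $d-\lambda+1$ points of $T$ lie on the moment curve they are in general position, so $\aff(T)$ is a genuine $(d-\lambda)$-plane containing $(d-\lambda)+1$ vertices; hence once we know it is a transversal it is automatically a \emph{complete} Kneser transversal. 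It therefore remains to prove that $\aff(T)$ meets $\conv(K)$ for every $k$-set $K$.

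The key step, and the main obstacle, is to establish the converse of Proposition~\ref{propIneq}: that the packing inequalities $\sum_{\alpha\in S}|A_\alpha|\le k-1$ (for all $S\subseteq[d-\lambda+2]$ with $OD(S)\le\lambda-1$) are not only necessary but \emph{sufficient} for $\aff(T)$ to be a transversal. I would argue contrapositively, running the sign analysis of Proposition~\ref{propIneq} in reverse. A $k$-set $K$ meeting $T$ is trivially transversal, so assume $K$ lies in the union of the gaps and let $S_K\subseteq[d-\lambda+2]$ be the set of indices of gaps meeting $K$. Since $\sum_{\alpha\in S_K}|A_\alpha|\ge|K|=k>k-1$, the inequalities force $OD(S_K)\ge\lambda$, i.e.\ $S_K$ splits into at least $\lambda+1$ parity blocks. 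Picking one vertex of $K$ from each of $\lambda+1$ consecutive parity blocks yields a subset $K'\subseteq K$ with $|K'|=\lambda+1$ whose gap indices are strictly increasing with \emph{odd} consecutive differences. By \eqref{thRadCyc}, in the circuit of the $(d+2)$-point set $T\cup K'$ two of its vertices receive the same sign exactly when an odd number of the remaining vertices lies between them; an odd gap-index difference means an odd number of the $w$'s lies between consecutive elements of $K'$, so all vertices of $K'$ end up on the same side of the Radon partition. Proposition~\ref{propRadTr} then gives $\conv(K')\cap\aff(T)\neq\emptyset$, and Proposition~\ref{propMinTr} upgrades this to $\conv(K)\cap\aff(T)\neq\emptyset$.

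Finally I would verify that the even distribution satisfies the packing inequalities. Given $S$ with $OD(S)\le\lambda-1$, only its intersection with $\{1,\dots,j\}$ contributes, since the remaining gaps are empty, and this intersection still has $OD\le\lambda-1$, so we may assume $S=\{s_1<\dots<s_p\}\subseteq[j]$. Every even consecutive difference is at least $2$ and at most $\lambda-1$ of them are odd, whence $j-1\ge s_p-s_1\ge 2(p-1)-(\lambda-1)$ and therefore $p\le\flo{\frac{j+\lambda}{2}}=\beta(\lambda,j)$. Since the whole configuration has only $r$ gaps of size $a+1$, the set $S$ contains at most $r$ of them, and $\sum_{\alpha\in S}|A_\alpha|=a\,p+(\text{big gaps in }S)$. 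When $r\le\beta(\lambda,j)-1$ this gives $\sum_{\alpha\in S}|A_\alpha|\le a\,p+r\le a\,\beta(\lambda,j)+r\le k-1$ by restriction~\eqref{r4}; when $r\ge\beta(\lambda,j)$ the number of big gaps is at most $p$, so $\sum_{\alpha\in S}|A_\alpha|\le (a+1)p\le(a+1)\beta(\lambda,j)\le k-1$ by restriction~\eqref{r3}. This confirms the inequalities and, via the second paragraph, that $\aff(T)$ is a complete Kneser transversal. I expect the delicate part to be the sign bookkeeping of the second paragraph rather than this final estimate.
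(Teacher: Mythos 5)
Your proof is correct and takes essentially the same route as the paper's: the same even distribution of $aj+r$ vertices into the first $j$ gaps, the same use of constraints~\eqref{r3} and~\eqref{r4}, and the same alternating-sign Radon argument via \eqref{thRadCyc}, Proposition~\ref{propRadTr} and Proposition~\ref{propMinTr}. The only organizational difference is that you package the combinatorial core as a converse of Proposition~\ref{propIneq} and re-derive the parity-block count $p\le\beta(\lambda,j)$ inline, whereas the paper reaches the same conclusion contrapositively, bounding the minimal covering family by $\beta(\lambda,j)+1$ and then citing Proposition~\ref{propAlter}.
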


\begin{proof}
We will use the notation from Section~\ref{secUp}. As we have said before, we will just split the $aj+r$ points among $A_1$, $\ldots$, $A_j$. For this, we choose $T=\{w_1, \ldots, w_{d-\lambda+1}\}$ in such a way that
$$
|A_1|=|A_2|=\ldots=|A_r|=a+1,\quad |A_{r+1}|=|A_{r+2}|=\ldots=|A_j|=a.
$$
First, we indeed have that $|\mathcal{A}|=r(a+1)+(j-r)a=ja+r$. We claim that $\aff(T)$ is indeed the desired transversal. Consider a $k$-set $K$ of $\mathcal{A}$. If $K$ has an element $w_i$, then clearly $\aff(T)\cap\conv(K)\neq\emptyset$. If $K\cap T=\emptyset$, the constraints (3) and (4) from Proposition~\ref{propOpti} guarantee that $K$ cannot be contained in the union of $\beta(\lambda,j)$ sets $A_i$. This means that if $I$ is the smallest family of indices such that
$$
K\subseteq \bigcup_{i\in I} A_i,
$$
then $|I|\geq \beta(\lambda,j)+1 = \cei{\frac{j+\lambda+1}{2}}\geq \frac{j+\lambda+1}{2}$. But then
$$
2|I|-\lambda-1\geq (j+\lambda+1)-\lambda-1= j,
$$
and therefore we can use Proposition~\ref{propAlter} to conclude that $I$ contains a set with at least $\lambda$ odd changes, and therefore that it has at least $\lambda+1$ parity blocks. Then, we can find a set $J\subseteq I$ with $\lambda+1$ elements of alternating parity. For each $j\in J$ choose a vertex $v_j$ in $K\cap A_j$. Call $K'$ the set of all these $v_j$'s.
	
If $v_{j_1}$ and $v_{j_2}$ are consecutive vertices in $K'$, then there is an odd number of elements from $T\cup K'$ in between them. This means that all the elements from $K'$ get the same sign in the Radon partition for $T\cup K'$. Therefore, by Proposition~\ref{propMinTr} and Proposition~\ref{propRadTr} we have that $\aff(T)\cap\conv(K)\neq\emptyset$. We have proven that $\aff(T)$ is the desired transversal.
\end{proof}

We may now prove the lower bound of $\zeta(k,d,\lambda)$, that is
$$
\zeta(k,d,\lambda)\geq (d-\lambda+1) + \max_{\substack{j\in \{\lambda+1,\ldots,d-\lambda+2\}\\ j+\lambda \text{ is odd}}} \left(\flo{\frac{k-1}{\beta(\lambda,j)}}\cdot j + {(k-1)}_{\bmod{\beta(\lambda,j)}}\right).
$$

\begin{proof}[Proof of the lower bound Theorem~\ref{thCyclic}]
We combine Proposition~\ref{propConstruction} and the optimal value from Proposition~\ref{propOpti} for every integer $j$ in $\{\lambda+1,\ldots,d-\lambda+2\}$. We can show a corresponding complete Kneser $(d-\lambda)$-transversal for the cyclic polytope with $(d-\lambda+1)+z_j$ vertices.
	
Notice that if $j+\lambda$ is even, then we would already have an example with the same number of points by using $j-1$ instead of $j$. Therefore, we do not lose examples by requiring that $j+\lambda$ be odd. By definition of $\zeta$ we can take the maximum over all these values of $j$ and thus this yields the claimed bound.
\end{proof}

\begin{remark}
The above proof is the best we can do by using a balanced distribution of points in the sets $A_i$. Let us briefly explain this by sketching an argument. Suppose that we distribute the points in the sets $A_{i_1}$, $A_{i_2}$, $\ldots$, $A_{i_j}$, where $i_1<\ldots<i_j$. We have that the set $\{i_1,\ldots,i_j\}$ has at most $j$ parity blocks, not more than those in the set $\{1,2,\ldots,j\}$ used in Proposition~\ref{propConstruction}. So when we get to an optimization problem analogous to the one in Proposition~\ref{propOpti}, we get a problem with stronger constraints, and this translates to an example with fewer or the same number of points. Nevertheless, as far as we know, there might be still better examples if we do not require an even distribution.
\end{remark}

\subsection{Asymptotics}

The bounds found by Theorem~\ref{thCyclic} are asymptotically correct in terms of~$k$. 

\begin{proof}[Proof of Theorem~\ref{asymp1}]
By Theorem~\ref{thCyclic} we have that 
$$
\frac{\zeta(k,d,\lambda)}{k}\le \frac{Z(k,d,\lambda)}{k}\le  \frac{(2-\alpha(d,\lambda))(k-1)}{k}+\frac{d-\lambda+1}{k},
$$
hence 
$$
\lim_{k\to\infty}\frac{\zeta(k,d,\lambda)}{k}\le 2-\alpha(d,\lambda).
$$
We will prove now that $\lim_{k\to\infty}\frac{\zeta(k,d,\lambda)}{k}\ge 2-\alpha(d,\lambda)$.

By the lower bound of Theorem~\ref{thCyclic} we have that 
$$
\zeta(k,d,\lambda)\ge z(k,d,\lambda)\ge \flo{\frac{k-1}{\beta(\lambda,j)}}\cdot j + {(k-1)}_{\bmod{\beta(\lambda,j)}}
$$
for every $j\in \{\lambda+1,\ldots,d-\lambda+2\}$ whenever $d+\lambda$ is odd. Suppose $d$ is odd. Considering $j=d-\lambda +2$ yields
$$
\zeta(k,d,\lambda)\ge \flo{\frac{k-1}{\frac{d+1}{2}}}(d-\lambda +2)+(d-\lambda+1).
$$
Then 
$$
\lim_{k\to\infty}\frac{\zeta(k,d,\lambda)}{k}\ge 2\left(\frac{d-\lambda+2}{d+1}\right)=2\left(1-\frac{\lambda-1}{d+1}\right)=\left(2-\frac{\lambda-1}{\frac{d+1}{2}}\right).
$$
Since $d$ is odd, the rightmost expression equals $2-\alpha(d,\lambda)$, as desired.

Suppose now that $d$ is even. Then $(d-\lambda+1)+\lambda$ is odd and we may consider $j=d-\lambda +1$ yielding that
$$
\zeta(k,d,\lambda)\ge \flo{\frac{k-1}{\frac{d}{2}}}(d-\lambda +1)+(d-\lambda+1).
$$
Then $\lim_{k\to\infty}\frac{\zeta(k,d,\lambda)}{k}\ge 2(\frac{d-\lambda+1}{d})=2-\alpha(d,\lambda)$ and once again we obtain the correct lower bound.
\end{proof}

\begin{remark}\label{rem:mm}
As a consequence of  \cite[Corollary 5.1]{BM}, it is easy to prove that
$$
2-\frac{1}{d}\leq \lim_{k\to\infty}\frac{m(k,d,2)}{k},
$$
and therefore that for $d\geq3$ and $k$ large enough, Conjecture~1 of \cite{ABMR} is false. Moreover,
$$
\lim_{k\to\infty}\frac{m^*(k,d,2)}{k}\leq \lim_{k\to\infty}\frac{\zeta(k,d,2)}{k}=2-\frac{2}{d}<2-\frac{1}{d}\leq \lim_{k\to\infty}\frac{m(k,d,2)}{k}.
$$
So, for $k$ large enough and $d\geq 3$, $m^*(k,d,2)<m(k,d,2)$.
\end{remark}

\begin{remark}
In the non-trivial range, when $k>\lambda +1$ and $k-1\geq \left \lceil{\frac{d}{2}}\right \rceil$, we have
$$
\zeta(k,d,\lambda) = \flo{(2-\alpha(d,\lambda))(k-1)}	+ (d-\lambda)+1 < d+2(k-\lambda).
$$
\end{remark}

\section{Some exact values of $m^*$}\label{exact}

If we set additional constraints in $k$, $d$ and $\lambda$ then we can find the exact value of $m^*(k,d,\lambda)$. An easy example is the case $k=\lambda$.

\begin{proposition}\label{propEasy}
The value of $m^*(k,d,k)$ is $d$.
\end{proposition}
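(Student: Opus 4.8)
The plan is to establish the two inequalities $m^*(k,d,k)\ge d$ and $m^*(k,d,k)\le d$ separately, using only elementary incidence and affine-independence arguments (no Radon theory is needed here since the interesting cardinalities are $d$ and $d+1$, not $d+2$).

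For the lower bound I would take an \emph{arbitrary} set $X$ of $d$ points in $\R^d$ and build a complete Kneser $(d-k)$-transversal for it. First I would pick any subset $T\subseteq X$ with $|T|=d-k+1$, which is possible since $1\le d-k+1\le d$ (as $d\ge k\ge 1$), and let $L$ be a $(d-k)$-plane containing $\aff(T)$. Then $L$ contains the $d-k+1$ points of $T$, so the completeness requirement holds. To see that $L$ is a transversal, I would observe that every $k$-set $K$ of $X$ satisfies $|T\cap K|\ge |T|+|K|-|X|=(d-k+1)+k-d=1$, so $T$ meets every $k$-set; any common point $v\in T\cap K$ lies in $L\cap\conv(K)$. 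Hence $L$ is transversal to the convex hull of every $k$-set, giving $m^*(k,d,k)\ge d$.

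For the upper bound it suffices to exhibit a single $(d+1)$-point set with no complete Kneser $(d-k)$-transversal, and I would take $X$ to be the vertex set of a $d$-simplex (that is, $d+1$ affinely independent points). Suppose $L$ were a complete Kneser $(d-k)$-transversal and set $m=|L\cap X|\ge d-k+1$. Since the vertices are affinely independent, any $(d-k)$-plane contains at most $d-k+1$ of them, so $m=d-k+1$; writing $T=L\cap X$ and $K_0=X\setminus T$ (a $k$-set), affine independence of $T$ forces $\aff(T)$ to be a $(d-k)$-plane contained in $L$, hence $L=\aff(T)$. As $|T|+|K_0|=d+1=|X|$, the set $K_0$ is the \emph{unique} $k$-set disjoint from $T$, so transversality of $L$ would require $\aff(T)\cap\conv(K_0)\ne\emptyset$. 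But a relation $\sum_{i\in T}\lambda_i v_i=\sum_{i\in K_0}\mu_i v_i$ with $\sum\lambda_i=\sum\mu_i=1$ and all $\mu_i\ge 0$ rearranges to an affine dependence $\sum_{i\in T}\lambda_i v_i-\sum_{i\in K_0}\mu_i v_i=0$ with coefficient sum $0$, which by affine independence forces every $\mu_i=0$, contradicting $\sum\mu_i=1$. Thus no such $L$ exists and $m^*(k,d,k)\le d$.

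The delicate points are exactly those dictated by the word \emph{complete} and by the phrase ``not necessarily in general position.'' In the lower bound the chosen $d-k+1$ points need not be affinely independent, so I cannot use $\aff(T)$ itself as the transversal; the remedy is to enlarge it to an honest $(d-k)$-plane while keeping all $d-k+1$ points, which preserves both completeness and transversality. In the upper bound the subtlety I expect to be the real obstacle is ruling out $(d-k)$-planes through \emph{more} than $d-k+1$ points of $X$: such a plane would meet every $k$-set automatically (no $k$-set could avoid $L\cap X$), yielding an unwanted transversal. This is precisely why the construction must use points in general position (the simplex), forcing $m=d-k+1$ and collapsing the whole question to the single complementary pair $(T,K_0)$, which affine independence then settles.
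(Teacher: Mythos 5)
Your proof is correct and follows essentially the same route as the paper: the lower bound via the pigeonhole fact that any $(d-k+1)$-subset of $d$ points meets every $k$-set, and the upper bound via the vertex set of a $d$-simplex, where affine independence kills the single complementary pair $(T,K_0)$. The only difference is that you are more careful than the paper on two minor points---extending $\aff(T)$ to a genuine $(d-k)$-plane when $T$ is affinely degenerate, and ruling out planes through more than $d-k+1$ of the simplex vertices---both of which the paper's terser argument implicitly glosses over.
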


\begin{proof}
If we have $d$ points or less in $\mathbb{R}^d$, then we can choose any subset $T$ with $d-k+1$ elements, and it will have non-empty intersection with any $k$-set. Therefore, $\text{aff}(T)$ will be a Kneser transversal. On the other hand, if we choose $d+1$ affinely independent points in $\mathbb{R}^d$, then any $(d-k+1)$-set $T$ will leave $k$ points in its complement, and therefore by affine independence $\text{aff}(T)$ cannot be a Kneser transversal.
\end{proof}

Here we present other cases in which we can obtain the exact value of $m^*(k,d,\lambda)$.
  
\begin{proof}[Proof of Theorem~\ref{exactvaluesm^*}]
By Theorems~\ref{lowerbound} and \ref{thCyclic}, we have that 
$$
(d-\lambda+1)+k\le m^*(k,d,\lambda)\le(d-\lambda+1) + \flo{(2-\alpha(d,\lambda))(k-1)},
$$
when $\alpha(d,\lambda)<1$ and $k\ge \cei{\frac{d}{2}} + 1$. Then $m^*(k,d,\lambda)=(d-\lambda+1)+k$ whenever $k = \flo{(2-\alpha(d,\lambda))(k-1)}$. Since for any real number $x$ we have that $\flo{x}\le x<\flo{x} +1$, then it follows that $m^*(k,d,\lambda)=(d-\lambda+1)+k$ if
$$
k\le (2-\alpha(d,\lambda))(k-1) < k+1.
$$
Doing some calculations, we have the following chain of equivalences
\begin{align*}
\frac{k}{k-1}&\le 2-\alpha(d,\lambda)<\frac{k+1}{k-1}, \\[1.5ex]
1+ \frac{1}{k-1}&\le 2-\alpha(d,\lambda)<1+\frac{2}{k-1}, \\[1.5ex]
\frac{1}{k-1}&\le 1-\alpha(d,\lambda)<\frac{2}{k-1}, \\[1.5ex]
k-1&\ge \frac{1}{1-\alpha(d,\lambda)}>\frac{k-1}{2},
\end{align*}
concluding that $m^*(k,d,\lambda)=(d-\lambda+1)+k$ whenever $\frac{2}{1-\alpha(d,\lambda)}+1>k\geq \frac{1}{1-\alpha(d,\lambda)}+1$.

The inequality $\frac{2}{1-\alpha(d,\lambda)}+1>k$ holds by hypothesis. On the other hand as $\alpha(d,\lambda)<1$ we have that $\lambda\le\cei{\frac{d}{2}}$. Then the inequality $k\ge\frac{1}{1-\alpha(d,\lambda)}+1$ holds since by hypothesis we have that $k\ge\cei{\frac{d}{2}}+1$. Therefore $m^*(k,d,\lambda)=(d-\lambda+1)+k$.
\end{proof}

When $\lambda=1$, we have obtained the exact value of $m^*(k,d,1)$.

\begin{proof}[Proof of Theorem~\ref{m^*(k,d,1}]
The theorem is trivial for $k=1$, in the other cases the upper bound is given by Theorem~\ref{thCyclic}. Let $X$ be any set of $d+2(k-1)$ points in $\R^d$, in order to prove that $d+2(k-1)\le m^*(k,d,1)$, we will show that there exists an hyperplane $H$ with at least $d$ points of $X$ and with at most $k-1$ points of $X$ in each of the two open half-spaces determined by $H$. Let $F$ be a face of $\conv(X)$ and let us consider $S\subset X\cap F$ with cardinality $d-1$, such that each point of $S$ is a vertex of the $d$-polytope $\conv(X)$. Consider the hyperplane $\aff(F)$, clearly $X$ is contained in one half-space of $\aff(F)$. We may now rotate continuously $\aff(F)$ by fixing $S$ to find an hyperplane $H$ with at least $d$ points of $X$ and with at most $k-1$ points  in each of the two open half-spaces determined by $H$. Then $H$ is a complete Kneser $d$-transversal to the convex hulls of all $k$-sets.
\end{proof}

By Theorem~\ref{cyceasy} we immediately have 

\begin{corollary}\label{igualdadm^*}
$m^*(k,d,\lambda)=d-\lambda+k$ when $\alpha(d,\lambda)\geq 1$.
\end{corollary}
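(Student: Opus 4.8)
The plan is to obtain the statement as a direct specialization of Theorem~\ref{cyceasy}. That theorem already records the full chain $m^*(k,d,\lambda)=\zeta(k,d,\lambda)=d-\lambda+k$ in the trivial range $\alpha(d,\lambda)\geq 1$, and the present corollary is nothing more than its outer equality $m^*(k,d,\lambda)=d-\lambda+k$. Thus the proof reduces to a one-line citation, and there is no genuine obstacle to overcome: all of the content lives inside Theorem~\ref{cyceasy}.

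Should one prefer a self-contained deduction, I would reproduce the two matching inequalities that Theorem~\ref{cyceasy} combines. For the lower bound, take an arbitrary set $X$ of $d-\lambda+k$ points and any $(d-\lambda+1)$-subset $T\subseteq X$. Its complement in $X$ has only $k-1$ points, so every $k$-subset of $X$ must meet $T$; hence $\aff(T)$ is transversal to the convex hull of every $k$-set and, containing $d-\lambda+1$ points of $X$, is a complete Kneser $(d-\lambda)$-transversal. This gives $d-\lambda+k\leq m^*(k,d,\lambda)$. For the upper bound, I would invoke $m^*(k,d,\lambda)\leq\zeta(k,d,\lambda)$ together with the obstruction established in Theorem~\ref{cyceasy}: in the trivial range the cyclic polytope on $(d-\lambda+1)+k$ vertices admits no complete Kneser $(d-\lambda)$-transversal, whence $\zeta(k,d,\lambda)\leq d-\lambda+k$.

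The single point worth verifying is that the hypothesis $\alpha(d,\lambda)\geq 1$ is precisely the inequality $\lambda-1\geq\cei{\frac{d}{2}}$ powering the cyclic-polytope obstruction, since it forces $\lambda+1=|K'|>\cei{\frac{d+2}{2}}$ and thereby guarantees, via (\ref{thRadCyc}) together with Propositions~\ref{propMinTr} and~\ref{propRadTr}, that no $(\lambda+1)$-subset of a $k$-set disjoint from $T$ can meet $\aff(T)$. Once this is noted, the equality follows with no further calculation, so the only real work has already been carried out in the proof of Theorem~\ref{cyceasy}.
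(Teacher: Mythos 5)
Your proposal matches the paper exactly: the paper's own proof of Corollary~\ref{igualdadm^*} is nothing but the citation ``By Theorem~\ref{cyceasy} we immediately have,'' which is your main argument. Your optional self-contained deduction also faithfully reproduces the paper's proof of Theorem~\ref{cyceasy} itself (the counting argument for the lower bound and the alternating-sign obstruction via Propositions~\ref{propMinTr} and \ref{propRadTr} for the upper bound), so nothing differs in substance.
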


We can now give a strict inequality between $m^*$ and $m$.

\begin{corollary}\label{m^*<m}
$m^*(k,d,\lambda)<m(k,d,\lambda)$ when $k>\lambda$ and  $\alpha(d,\lambda)\geq 1$.
\end{corollary}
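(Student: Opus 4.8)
The plan is to combine the exact value of $m^*$ in the trivial range, already recorded in Corollary~\ref{igualdadm^*}, with the known lower bound on $m$ from inequality~\eqref{ineq1}, and then to observe that the hypothesis $k>\lambda$ forces a genuine gap between the two quantities.

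First I would invoke Corollary~\ref{igualdadm^*} to write $m^*(k,d,\lambda)=d-\lambda+k$ under the standing assumption $\alpha(d,\lambda)\geq 1$. Next I would recall the lower bound $d-\lambda+k+\cei{\frac{k}{\lambda}}-1\leq m(k,d,\lambda)$ supplied by \eqref{ineq1}. Subtracting the former from the latter, the difference $m(k,d,\lambda)-m^*(k,d,\lambda)$ is at least $\cei{\frac{k}{\lambda}}-1$.

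The only point that remains to be checked is that this quantity is strictly positive, and this is exactly where the hypothesis enters. Since $k>\lambda$ we have $\frac{k}{\lambda}>1$, and hence $\cei{\frac{k}{\lambda}}\geq 2$, so that $\cei{\frac{k}{\lambda}}-1\geq 1$. Therefore $m(k,d,\lambda)\geq (d-\lambda+k)+1>d-\lambda+k=m^*(k,d,\lambda)$, which is the desired strict inequality.

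There is essentially no obstacle beyond assembling the already-established ingredients; the entire content is the observation that the ceiling term $\cei{\frac{k}{\lambda}}-1$ appearing in the lower bound for $m$ becomes positive precisely when $k$ exceeds $\lambda$, which is exactly the stated hypothesis.
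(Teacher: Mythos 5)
Your proof is correct and follows essentially the same route as the paper: both combine the exact value $m^*(k,d,\lambda)=d-\lambda+k$ from Corollary~\ref{igualdadm^*} with the lower bound $d-\lambda+k+\cei{\frac{k}{\lambda}}-1\le m(k,d,\lambda)$ from \eqref{ineq1}, and both close the argument by noting that $k>\lambda$ forces $\cei{\frac{k}{\lambda}}\ge 2$. No gaps; this matches the paper's own argument in every essential respect.
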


\begin{proof}
Since $d-\lambda+k+\cei{\frac{k}{\lambda}}-1\le m(k,d,\lambda)$ (\cite[Corollary 1]{ABMR}) and $m^*(k,d,\lambda)=d-\lambda+k$ (Corollary~\ref{igualdadm^*}), it follows that $m^*(k,d,\lambda)<d-\lambda+k+\cei{\frac{k}{\lambda}}-1\le m(k,d,\lambda)$ if $1<\cei{\frac{k}{\lambda}}$. The result holds since by hypothesis $k>\lambda$.
\end{proof}

\begin{remark}
In the trivial range $\alpha(d,\lambda)\geq 1$, there exist configurations of $d-\lambda+k+\cei{\frac{k}{\lambda}}-1$ points in $\R^d$ with a $(d-\lambda)$-transversal to the convex hulls of all the $k$-sets and without complete Kneser $(d-\lambda)$-transversals.
For infinitely values of $d,\lambda$ and $k$, these configurations of points are the cyclic polytopes (see Section~\ref{cyclic}). For instance, the cyclic polytope with $6$ points in general position in $\R^4$ has transversal lines to the convex hulls of all the $4$-sets but does not have complete Kneser transversal lines. This means $m^*(4,4,3)=5$ and $6\le m(4,4,3)$.
\end{remark}

We finish this work by mentioning that the smallest values of $k,d$ and $\lambda$ for which we do not know the value of $m^*(k,d,\lambda)$ are $k=3$, $d=5$ and $\lambda=2$. We only have the bounds 
$$
7\le m^*(3,5,2)\le 8.
$$

\section*{Acknowledgements}

The authors would like to thank the editor and the anonymous referee for their valuable comments and suggestions.


\end{document}